\tikzstyle arrowstyle=[scale=1]
\tikzstyle directed=[postaction={decorate,decoration={markings,
    mark=at position .5 with {\arrow[arrowstyle]{stealth}}}}]
\definecolor{azure}{rgb}{0.,0.5,1.0} 
\definecolor{green}{rgb}{0.,0.6,0.} 
\newcommand\yred{\Yfillcolour{red}}
\newcommand\ylw{\Yfillcolour{yellow}}
\newcommand\bleu{\textcolor{blue}}
\newcommand\black{\textcolor{black}}
\newcommand\rouge{\textcolor{red}}
\definecolor{jaunefonce}{rgb}{1,.6,0}
\definecolor{vertfonce}{rgb}{0.,0.5,0.} 
\renewcommand\vert{\textcolor{vertfonce}}
\declaretheoremstyle[
  spaceabove=6pt, spacebelow=6pt,
  headindent=0pt,
  headfont=\normalfont\bfseries,
  notefont=\mdseries, notebraces={(}{)},
    bodyfont=\itshape,
  postheadspace=1em]{mystyle}
\declaretheorem[numberwithin=section,style=mystyle]{lemma}
\numberwithin{equation}{section}
\numberwithin{theorem}{section}
\numberwithin{figure}{section}
\numberwithin{table}{section}
\newtheorem{conjecture}{\bleu{Conjecture}}
\renewenvironment{proof}[1][\proofname]{\par
  \pushQED{\qed}%
  \normalfont \topsep6\p@\@plus6\p@\relax
  \trivlist
   \item[\hskip\labelsep
        \scshape
    \vert{#1}\@addpunct{.}]\ignorespaces
}{%
  \popQED\endtrivlist\@endpefalse
}
\declaretheoremstyle[
spaceabove=3pt, spacebelow=3pt,
  headindent=0pt,
  headfont= \normalfont\bfseries,
  notefont=\mdseries, notebraces={(}{)},
    bodyfont=\normalfont,
  postheadspace=1em,
qed={\tiny\bleu{$\blacksquare$}}
]{probstyle}
\theoremstyle{definition}
\renewcommand*\backref[1]{\ifx#1\relax \else (On page #1) \fi}
\newcommand\auteur[1]{{\sc #1}}
\newcommand\define[1]{\bleu{\bf #1}}
\newcommand{\pref}[1]{{\rm (\ref{#1})}}
\newcommand\titreref[1]{{\em #1}}
\newcommand\vol[1]{{\bf #1}}
\newcommand\wrapcaption[1]{\captionsetup{font=tiny}
\caption{#1.}
\captionsetup{font=small}}
\newcommand\area{\mathrm{area}}
\newcommand{\A}{\mathcal{A}}
\renewcommand{\a}{\boldsymbol{a}}
\newcommand{\Cat}{\mathcal{A}}
\newcommand{\des}{\mathrm{des}}
\newcommand\xkn[2]{\mathcal{X}^{(#1,#2)}}
\newcommand\Dk[1]{\mathcal{X}_{#1}}
\newcommand\End{\mathrm{End}}
\newcommand\hook{\varepsilon}
\newcommand\HSeries{\mathbb{H}}
\newcommand\Hall{\mathcal{E}}
\newcommand{\HallOper}[3]{#1\{\xkn{#2}{#3}\}}
\newcommand\Id{\mathrm{Id}}
\newcommand\Lie[2]{\bleu{\Big[ \black{#1}\,,\, \black{#2} \Big]}}
\newcommand\MacH{\widetilde{H}}
\newcommand\N{{\mathbb N}}
\newcommand{\qbinom}[2]{\genfrac{[}{]}{0pt}{}{#1}{#2}}
\newcommand\Q{\mathbb{Q}}
\newcommand\Rational{\mathbb{Q}}
\newcommand\simd{\mathrm{sim}}
\newcommand\Sim{\mathrm{Sim}}
\renewcommand\S{{\mathbb S}}
\newcommand{\unepart}{\hbox{\tiny\,\text \euro\,}}
\newcommand\x{\boldsymbol{x}}
\newcommand\y{\boldsymbol{y}}
\newcommand\Z{\mathbb{Z}}
\definecolor{GREEN}{rgb}{0.,0.6,0.} 
\title{
	{\Large{\bleu{Macdonald polynomials and operators\\
	    and Catalan Combinatorics}}\\
	{\small(Notes for the Link Homology AIM Research Community)}}\\
}
\date{\today}
\author{ \textcolor{green}{Fran\c{c}ois Bergeron}}
\begin{document}

\maketitle

{ \setcounter{tocdepth}{1}\parskip=0pt\footnotesize \tableofcontents}
\parskip=8pt  

\section{Introduction} \label{sec_intro}
Our main aim with these notes is to introduce the combinatorial and symmetric function tools that relate to the description of the Poincaré polynomial of the triply graded Khovanov-Rozansky homology of torus links, {\sl a.k.a.} the (reduced) ``superpolynomial'' of these links. There are different normalizations for these polynomial, each natural in the setup they arise in. The one that we will use is chosen to make interesting ``combinatorial'' features stand out\footnote{For instance related to conjectures in~\cite{DGR}.}. In particular, our version of the polynomials lie in $\mathbb{N}[q,t,a]$, and are symmetric in the $q$ and $t$ variables. To help the reader coming from different backgrounds, hence used the to specific convention of his own subject, let us immediately mention a few explicit instances of these polynomials (using our normalization) for some small classical $(k,n)$-torus knots: here denoted $\mathcal{P}_{kn}=\mathcal{P}_{kn}(q,t;\a)$. We have:
\begin{enumerate}\itemsep=4pt
\item $\mathcal{P}_{32}=(q+t)+\a$, \qquad (the trefoil);
\item $\!\begin{aligned}[t]\mathcal{P}_{43}&= (q^3+q^2t+qt^2+t^3+qt)+(q^2+qt+t^2+q+t)\,\a+\a^2\\
    &=(s_3+s_{11})+(s_1+s_2)\,\a +\a^2;\end{aligned}$
\item $\mathcal{P}_{54}=(s_{31} + s_{41} + s_{6}) + (s_{11} + s_{21} + s_{31} + s_{3}  + s_{4} + s_{5})\,\a + (s_{1} + s_{2} + s_{3})\,\a^{2} + \a^{3}$;
\item $\!\begin{aligned}[t]\mathcal{P}_{56}=&\ (s_{43} + s_{42} + s_{62} +  s_{61}  + s_{71} + s_{81} + s_{(10)}) \\
&+ (s_{33}+ s_{32} + s_{42} + s_{52}  +  s_{31} + 2s_{41} + 2s_{51} + 2s_{61} + s_{71} + s_{6} + s_{7} + s_{8} + s_{9})\,\a\\ 
&+ (s_{32} +s_{11} + s_{21}  + 2s_{31}  + s_{41} + s_{51}  + s_{3}+  s_{4}+ 2s_{5} + s_{6} + s_{7})\,\a^{2} \\ 
&+ (s_{1} + s_{2} + s_{3} + s_{4})\,\a^{3} + \a^{4}.\end{aligned}$ 
\end{enumerate} 
Here, in order to make the description of mangeable size\footnote{The $(q,t,\a)$-expansion of $\mathcal{P}_{56}$ has $197$ terms.}, we have ``compressed'' the description using the Schur polynomials:
	$$s_{ab}=s_{ab}(q,t):= q^at^b+q^{a-1}t^{b+1}+\ldots + q^{b+1} t^{a-1}+q^bt^a =\frac{q^{a+1}t^b-q^b t^{a+1}}{q-t},$$
assuming that $a\geq b\geq 0$. This presentation makes apparent one of the (conjectural) properties of the polynomials $\mathcal{P}_{nk}$, namely that they appear to be Schur positive in $q,t$.

We will describe at least two different approaches to the above polynomials: one via operators on symmetric functions, and another that exploits specific combinatorial constructions. A first instance of the operators approach makes use of ``Macdonald eigenoperators'', which is to say that their eigenfunctions are the (modified) Macdonald polynomials described in the next section. This makes it possible to describe all the superpolynomials for any $(n,kn+1)$-torus knots. 

To generalizes these considerations to all torus knots and links, we  next consider an operator realization of the elliptic Hall algebra (see \autoref{sec_elliptic}), which is essentially described as the enveloping algebra of the Lie-bracketing closure of four simple basic operators on symmetric functions. As we will explain in \autoref{sec_homology}, elements of the resulting algebra are then used as ``creation'' operators to build up symmetric functions $\mathcal{F}_{nk}$, with coefficients in the fraction field $\mathbb{Q}(q,t)$. The $(q,t)$-coefficients of power of $a$ in the superpolynomials $\mathcal{P}_{nk}$ then appear as coefficients of hook-indexed functions in these $\mathcal{F}_{nk}$.

A parallel story, goes through a purely combinatorial description of the $\mathcal{P}_{nk}$, via constructions on (integer) partitions contained in a ``triangular'' partition\footnote{See \autoref{sec_combinatorial} for definitions.} associated to the pair of integers $n$ and $k$. The number of such partitions, for the case $k=n+1$, is the well known Catalan number $\frac{1}{n+1}\binom{2n}{n}$, and many of the statistic and construction involved in the story corresponds to classical notions of ``Catalan'' combinatorics. Nowadays, this subject includes many variations on the theme, including the study of constructions on partitions included inside any fixed triangular one. On the relevant partitions, we will describe the two statistics: ``area'' and ``dinv''. And each $(q,t)$ coefficients of the polynomials $\mathcal{P}_{nk}(q,t;a)$ will be described as the weighted enumeration of the partitions, with weight $q^{\mathrm{area}}$ and $t^{\mathrm{dinv}}$. We will aim to make the description of these statics as natural as possible, particularly the ``dinv'' statistic which often appears to have a strange definition. We believe that our approach will remove much of this strangeness. 

It would be interesting to add to this the combinatorial connections to the Hilbert Scheme side of the story. Such connections are very nicely described by Eugene Gorsky in his series of lectures.

\section{Plethysm}\label{SecPlethysm}
Let $\Lambda_{q,t}$ denote the algebra of symmetric functions\footnote{Using this terminology to stress that we are working with ``polynomials'' in infinitely many variables.} in the variables $\x=x_1,x_2,x_3,\ldots$ over the fraction field $\mathbb{Q}(q,t)$.
Our calculations will be greatly simplified using \define{plethystic operations}. This corresponds\footnote{Technically, it would be best to say that we have a $\lambda$-ring structure on $\mathcal{A}$, see \href{https://en.wikipedia.org/wiki/Lambda-ring}{lambda-ring}.} to a specific action of $\Lambda_{q,t}$ on a algebra $\mathcal{A}$ of series or polynomials (over $\mathbb{Q}(q,t)$). In simple terms, if we denote by $f[\boldsymbol{a}]$ the effect of $f\in\Lambda_{q,t}$  on $\boldsymbol{a}\in \mathcal{A}$, we have:
\begin{align}\itemsep=6pt
&\bleu{0[\boldsymbol{a}]}=\bleu{0},\qquad {\mathrm{and}}\qquad \bleu{1[\boldsymbol{a}]}=\bleu{\boldsymbol{a}},\\
&\bleu{\big(\sum_i \alpha_i f_i\big)[\boldsymbol{a}]}=\bleu{\sum_i  \alpha_i f_i[\boldsymbol{a}]},\\
&\bleu{\big(\prod_i \alpha_i f_i\big)[\boldsymbol{a}]}=\bleu{\prod_i  \alpha_i f_i[\boldsymbol{a}]}.
\end{align}
with the $\alpha_i$ denoting scalars. Since $\Lambda_{q,t}$ is generated (as an algebra) by the power-sum symmetric functions
$p_k(\x)=\sum_i x_i^k$, it follows that all calculations may be reduced to $p_k[\boldsymbol{a}]$ via the above rules. The operators\footnote{In the context of $\lambda$-rings, these are the Adams operators.} $p_k$ are especially nice, since they have the following properties which may be used to turn the definition into a calculation algorithm.
\begin{align}\itemsep=6pt
{\rm{(1)}}\quad &\textstyle \bleu{p_k[\sum_i \boldsymbol{a}_i]}= \bleu{\sum_i p_k[\boldsymbol{a}_i]},           
	&{\rm{(2)}}&\quad \textstyle \bleu{p_k[\prod_i \boldsymbol{a}_i]}= \bleu{\prod_i p_k[\boldsymbol{a}_i]},\nonumber\\
{\rm{(3)}}\quad &\bleu{p_k[ \boldsymbol{a}/\boldsymbol{b}]}=\bleu{p_k[\boldsymbol{a}]/p_k[\boldsymbol{b}]},          
	&{\rm{(4)}}&\quad \bleu{p_k[p_j]}=\bleu{p_{kj}},\label{def_plethysme}\\
{\rm{(5)}}\quad &\bleu{p_k[\boldsymbol{a}\otimes \boldsymbol{b}]}=\bleu{p_{k}[\boldsymbol{a}]\otimes p_{k}[\boldsymbol{b}]},              
	&{\rm{(6)}}&\quad \bleu{p_k[\varepsilon]} =\bleu{(-1)^k},\nonumber\\
{\rm{(7)}}\quad &\bleu{p_k[x]=x^k},\ \hbox{if}\  \bleu{x}\  \hbox{is a variable},                   
&{\rm{(8)}}&\quad \bleu{p_k[c]=c},\ \hbox{if}\  \bleu{c}\  \hbox{is  a constant}.\nonumber
\end{align}
In simple situations, when $\boldsymbol{a}$ is a positive sum of monomials, the plethysm $f[\boldsymbol{a}]$ corresponds to the ``evaluation'' of $f$ in the ``monomials'' of $\boldsymbol{a}$. Let us illustrate all this with some examples.

Starting with a very simple case, it is handy to consider sets of variables as formal sums, setting $\x=\sum_i x_i$ or $\y=\sum_i y_i$. Observe that this makes it so that $f[\x]=f(\x)$, since
     $$\bleu{p_k[x_1+x_2+\ldots]}=\bleu{x_1^k+x_2^k+\ldots} = \bleu{p_k(\x)}.$$
 The ``classical'' comultiplication on symmetric function then corresponds (up to a direct traduction) to the calculation of $f[\x+\y]$. More precisely, for the comultiplication $\Delta:\Lambda \rightarrow \Lambda\otimes \Lambda$  on $f$ expands in the Schur basis as
    \begin{equation}\label{f_coproduct}
       \bleu{\Delta f} =\bleu{ \sum_{\lambda,\mu} c_{\lambda,\mu}\, s_\lambda\otimes s_\mu},
    \end{equation}
if and only if
     \begin{equation}
       \bleu{f[\x+\y] } =\bleu{ \sum_{\lambda,\mu} c_{\lambda,\mu}\, s_\lambda(\x) s_\mu(\y)}.
    \end{equation}
  The \define{primitive elements} for $\Delta$ are the $p_k$, since $p_k[\x+\y] = p_k[\x]+p_k[\y] $. Plethysms may also involve series of symmetric functions. For example, consider
     \begin{align}
           \bleu{\HSeries(\x)}&:=
         \bleu{\sum_{n\geq 0} h_n(\x)}
          =\bleu{\exp\!\Big(\sum_{k\geq 1}\frac{p_k(\x)}{k}\Big)}.\label{H_series}\\
       \bleu{\mathbb{E}(\x)}&:=
         \bleu{\sum_{n\geq 0} e_n(\x)}
          =\bleu{\exp\!\Big(\sum_{k\geq 1}(-1)^{k-1} \frac{p_k(\x)}{k}\Big)}.\label{E_series}
       \end{align}
 Then we have
     \begin{equation}
     	\bleu{\HSeries[\x+\y]} 
       		=\bleu{\HSeries(\x) \, \HSeries(\y)},
	\qquad {\rm and}\qquad \bleu{\mathbb{E}[\x+\y]} 
       		=\bleu{\mathbb{E}(\x) \, \mathbb{E}(\y)},
    \end{equation}
Comparing terms of same degree in the identity $\HSeries[\x]\,\mathbb{E}[-\x]=1$, we get
  \begin{align}
  	 &\bleu{e_n[-\x]}=  \bleu{(-1)^n h_n(\x)},\\
          &\bleu{e_n[\x+\y] }=  \bleu{\sum_{k=0}^n e_k(\x)e_{n-k}(\y)}.
  \end{align}
Let us next consider the calculation of $s_2[\x\y]$ with the aim of writing the answer in terms of Schur polynomials. Since 
	$$s_2=\frac{1}{2}(p_1^2+p_2),\qquad{\rm and}\qquad s_{11}=\frac{1}{2}(p_1^2-p_2),$$
we calculate that
\begin{align*}
s_2[\x\cdot \y]
	&=\frac{1}{2}(p_1^2+p_2)[\x\cdot\y]\\
	&=\frac{1}{2}(p_1^2[\x\cdot\y]+p_2[\x\cdot\y])\\          
	&=\frac{1}{2}(p_1(\x)^2\, p_1(\y)^2+p_2(\x)\,p_2(\y))\\ 
	&=s_2(\x) s_2(\y) +s_{11}(\x) s_{11}(\y),                                  
\end{align*}
In general, using the identity, we get
\begin{align} 
	\bleu{h_n[\x\cdot\y]} 
	 	&= \bleu{\sum_{\lambda \vdash n} \frac{1}{z_\lambda} p_\lambda(\x) p_\lambda(\y)}\\
		&= \bleu{\sum_{\lambda \vdash n} s_\lambda(\x) s_\lambda(\y)}.\label{cauchy_ss}
	\end{align}
See \autoref{sec_sym} for more identities related to this last calculation. In particular, we have the identities:
   \begin{equation}
   	\bleu{\HSeries(a\x)^\perp f(\x)} = \bleu{f[\x+a]},\qquad {\rm and}\qquad \bleu{\mathbb{E}(a\y)^\perp f(\x)} = \bleu{f[\x-\varepsilon a]},
  \end{equation}
  where $G^\perp$ stands for the adjoint of multiplication by $G$ for the Hall scalar product, and $f$ is any symmetric function. Unfolding the second identity, we have
\begin{equation}\label{e_skewing}
   \bleu{ \sum_{k\geq 0} a^k (e_k^\perp f)(\x)} 
   	= \bleu{f[\x-\varepsilon a]}.
 \end{equation}
Recall that Schur functions are orthonormal for the Hall scalar product. The explicit effect of $e_k^\perp$ on a given Schur function $s_\lambda=s_\lambda(\x)$ is given by the \define {dual Pieri rule}
\begin{equation}\label{e_Pieri}
   \bleu{e_k^\perp s_\lambda}= \bleu{\sum_{\lambda/\mu\in\mathrm{VS}(k)} s_\mu},
 \end{equation}	
 where the sum is over the set of partitions $\mu$ obtained by removing a ``vertical strip'' of size $k$ from $\mu$. This is a set of $k$ boundary cells of $\lambda$, no two of which lying on the same row. For example, as illustrated in \autoref{figure_strip}, the partition $\mu=32$ is obtained from $4211$ by removing a vertical strip of size $3$.
 \begin{figure}[ht]
 \begin{tikzpicture}
 \Yboxdim{10pt}
  \Yfillcolour{yellow}
 \tgyoung(0cm,0cm,;;;!\yred;!\ylw,;;!\yred,;,;)
  \node at (2.5,.4) {\Large$\longmapsto$};
    \Yfillcolour{yellow}
  \tyng(3.5cm,0cm,3,2)
 \end{tikzpicture}
\caption{Removing a vertical strip of size $3$.}
\label{figure_strip}
\end{figure}
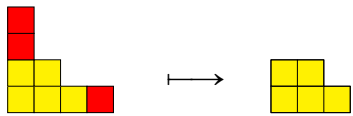
 For more on plethysm, see~\cite{livre,haglund}. 

As a last illustration of how plethysm works, let us expand $h_n(\x)$ in terms of the basis\footnote{As we will see, this is an interesting specialization of Macdonald polynomials, up to a multiplicative factor.} $s_\mu[\x/(1-q)]$. 
Considering \autoref{cauchy_ss}, we  get 
 $$h_n[\x] =  h_n\big[ (1-q)\,{\textstyle\frac{\x}{1-q}}\big]=\sum_{\mu\vdash n} s_\mu[1-q]\, s_\mu\big[{\textstyle\frac{\x}{1-q}}\big].$$
By direct calculation in \autoref{defn_Omega}, we get $\HSeries[(1-q)z]=(1-qz)/(1-z)$. Hence $h_0[1-q]=1$ and $h_k[1-q]= 1-q$ when $k>0$.  It follows that, at $(1-q)$ the Jacobi-Trudi determinant (see \autoref{formule_jacobi}) most often evaluates to $0$ (since we have two rows equal), except when $\mu=(a\,|\,\ell)$ is a hook (one part of size $a+1$ , and $\ell$ parts of size $1$). 
When this is so, one recursively shows that the determinant is equal to $(-q)^\ell(1-q)$.  We thus conclude that
  \begin{align}
     \bleu{h_n[\x]} &= \bleu{\sum_{\ell=0}^{n-1} s_{(a\,|\,\ell)}[1-q] \, s_{(a\,|\,\ell)}\textstyle\big[{\x}/{(1-q)}\big]}\\
     			  &= \bleu{\sum_{\ell=0}^{n-1} (-q)^\ell(1-q) \, s_{(a\,|\,\ell)}\textstyle\big[{\x}/{(1-q)}\big]}.
  \end{align}
Equivalently,
  \begin{equation}\label{hook_eval_1q}
     \bleu{s_\mu[1-q]} =\begin{cases}
      \bleu{ (-q)^\ell(1-q)}& \text{if}\ \mu=(a\,|\,\ell), \\
      \bleu{ 0} & \text{otherwise}.
\end{cases}
  \end{equation}
  It follows that 
    \begin{equation}\label{eval_hook_1a}
     \bleu{\frac{s_{(a\,|\,\ell)}[1-\varepsilon\,q]}{1+q}} = \bleu{q^\ell}.
     \end{equation}

\section{Macdonald polynomials} \label{sec_macdonald}
As alluded to in the introduction, our approach to the calculation of the superpolynomials for link homology involves operators defined in terms of ``modified'' Macdonald polynomials, denoted\footnote{The use of tilde is inherited from a long tradition.} by $\MacH_\mu=\MacH_\mu(q,t;\x)$, a renormalized version of the \define{original Macdonald polynomials} that are usually denoted by $P_\mu$. Rather than proceed historically, let us skip directly to definition of the version that we will use. The set $\{\MacH_\mu(q,t;\x)\}_\mu$ is linear basis of the algebra $\Lambda_{q,t}$ of symmetric functions\footnote{Using this terminology to stress that we are working with ``polynomials'' in infinitely many variables.} in the variables $\x=x_1,x_2,x_3,\ldots$ over the fraction field $\mathbb{Q}(q,t)$. 

Recall that ${\lambda \prec \mu}$ stands for 
$\lambda$ being (strictly) smaller than $\mu$ in the \define{dominance order}, which is defined by 
          $$\bleu{\lambda\preceq \mu}\qquad  \hbox{ iff \qquad}\qquad \forall i\qquad \bleu{\sum_{j=1}^{{i}}  \lambda_j \leq \sum_{j=1}^{{i}}  \mu_j },$$
adding $0$ parts if necessary to make sense of the above. 
The \define{(modified\footnote{We sometimes use this terminology to underline that they differ from the classical one, essentially via a simple plethysm.}) Macdonald polynomial}, $\MacH_\mu=\MacH_\mu(q,t;\x)$ are homogeneous symmetric functions of degree $n=|\mu|$, uniquely characterized by the equations
\begin{enumerate}\itemsep=6pt
   \item[(1)]  $\bleu{\langle \MacH_\mu(q,t;\x),s_n(\x)\rangle=1}$,
  \item[(2)]  $\bleu{\langle {\MacH_\mu[q,t;\x\,(1-q)]}),s_\lambda(\x)\rangle=0}$, for all $\lambda\vdash n$ such that $\bleu{ \lambda\not\succeq \mu}$ in dominance order,
   \item[(3)]  $\bleu{\langle{\MacH_\mu[q,t;\x\,(1-t)]},s_\lambda(\x)\rangle=0}$, for all $\lambda\vdash n$ such that  $\bleu{ \lambda\not\succeq \rouge{\mu'}}$.
\end{enumerate} 
For $n=2$, the first condition imposes that
   $$\MacH_2 = s_2+a s_{11},\qquad {\rm and} \qquad \MacH_{11} = s_2+b s_{11},$$
  for some $a,b$ in $\Q(q,t)$. Conditions (2) and (3) respectively state that
     $$\langle \MacH_2[\x\,(1-\bleu{q})],s_{11}\rangle = 0,\qquad {\rm and} \qquad \langle \MacH_{11}[\x\,(1-\bleu{t})],s_{2}\rangle = 0,$$
  which implies that $a=q$ and $b=t$, hence   
$$
\MacH_{{2}}(\bleu{q},\bleu{t};\x)=s_{{2}}(\x)+\bleu{q}\,s_{{11}}(\x),\qquad {\rm and}\qquad 
\MacH_{{11}}(\bleu{q},\bleu{t};\x)=s_{{2}}(\x)+\bleu{t}\,s_{{11}}(\x).
$$
We may also calculate that 
\begin{eqnarray*}
&&\MacH_{{3}}(\bleu{q},\bleu{t};\x)=s_{{3}}(\x)+ \left(\bleu{ {q}^{2}+q }\right) s_{{21}}(\x)+\bleu{{q}^{3}}s_{{111}}(\x),\\
&&\MacH_{{21}}(\bleu{q},\bleu{t};\x)=s_{{3}}(\x)+ \left(\bleu{ q+t }\right) s_{{21}}(\x)+\bleu{qt}\,s_{{111}}(\x),\\
&&\MacH_{{111}}(\bleu{q},\bleu{t};\x)=s_{{3}}(\x)+ \left(\bleu{ {t}^{2}+t} \right) s_{{21}}(\x)+\bleu{{t}^{3}}s_{{111}}(\x);
\end{eqnarray*}
 The coefficients of the expansion $\MacH_\lambda(q,t;\x)=\sum_\mu \widetilde{K}_{\lambda\mu}(q,t)\,s_\mu(\x)$, are the \define{$(q,t)$-Kostka polynomials}. We $n=4$ we get the matrix
 $$(\widetilde{K}_{ \lambda\mu}(q,t))_{\lambda,\mu\vdash 4}=\left(\begin{array}{rrrrr}
1 & q^{3} + q^{2} + q & q^{4} + q^{2} & q^{5} + q^{4} + q^{3} & q^{6} \\
t & q^{2} t + q t + 1 & q^{2} t + q & q^{3} t + q^{2} + q & q^{3} \\
t^{2} & q t^{2} + q t + t & q^{2} t^{2} + 1 & q^{2} t + q t + q & q^{2} \\
t^{3} & q t^{3} + t^{2} + t & q t^{2} + t & q t^{2} + q t + 1 & q \\
t^{6} & t^{5} + t^{4} + t^{3} & t^{4} + t^{2} & t^{3} + t^{2} + t & 1
\end{array}\right),$$
with row and column indexed by partitions in ``lexicographic order'' ({\sl i.e.}: $4,31,22,211,1111$).
Setting both $q$ and $t$ equal to $1$, we see that $\MacH_\mu(1,1;\x)=h_1(\x)^n$ for all $\mu\vdash n$.
 One may observe many (well-known) symmetries in this matrix. A first symmetry, easily deduces from the above characterization, corresponds to 
 \begin{equation}
     \bleu{\MacH_{\rouge{\mu'}}(q,t;\x):=\MacH_\mu(\rouge{t},\rouge{q};\x)}.
 \end{equation}
In order to describe a second symmetry, let us consider the involutive ``$\downarrow$'' operator\footnote{This operator is denoted $\bar\omega$ in the proof of the shuffle conjecture by Carlsson-Mellit.}, on symmetric functions $f(q,t;\x)$ over the field $\Rational(q,t)$, defined as $\downarrow f( q,t;\x):= \omega\, f(\x;q^{-1},t^{-1})$.   Then, it follows directly from results of~\cite{macdonald_lotha} that
  \begin{equation}\label{mac_sym2}
     \bleu{q^{\eta(\mu')}t^{\eta(\mu)} \downarrow \MacH_\mu(q,t;\x):=\MacH_\mu(q,t;\x)},
 \end{equation}
 where $\eta(\mu')$ and $\eta(\mu)$ are jointly defined by the vector identity
   $$\bleu{(\eta(\mu'),\eta(\mu')}=\bleu{\sum _{(i,j)\in\mu} (i,j)}.$$
Here $(i,j)\in \mu$ stands for $(i,j)$ being a cell\footnote{Cells are encoded by the cartesian coordinates of their south-west vertex.} of $\mu$. For instance, the cells of the partition  $\mu=32$ are
\begin{center}
 \Yboxdim{22pt}
 \Ylinecolour{blue}
\Ynodecolour{green!50!black}
\Yfillcolour{yellow}
\begin{tiny}
 \young(<{(0,0)}><{(1,0)}><{(2,0)}>,<{(0,1)}><{(1,1)}>)
 \end{tiny}
\end{center}
The specialization at $t=1$ of Macdonald polynomials
expand ``multiplicatively'' as
   \begin{align} \label{special1}
      &\bleu{\MacH_\mu(q,1;\x)}
           =\bleu{\frac{h_\mu\big[{\x}/{(1-q)}\big]}{h_\mu\big[{1/{(1-q)}\big]}}}.
   \end{align}
On the other hand, at $t=1/q$, we get    
   \begin{align}\label{special1_q}
      &\bleu{\MacH_\mu(q,1/q;\x)}=\bleu{\frac{s_\mu\big[{\x}/{(1-q)}\big]}{s_\mu\big[{1/{(1-q)}\big]}}}.
   \end{align}
The \define{$\star$-scalar product} is defined on the power-sum basis by
    \begin{equation}
      \bleu{\langle p_\mu,p_\lambda\rangle_\star }:= \begin{cases}
     \bleu{Z_\mu(q,t)}  & \text{if}\ \bleu{\mu=\lambda}, \\
     \bleu{0} & \text{otherwise},
\end{cases} 
    \end{equation}
  where $ Z_\mu(q,t) :=(-1)^{|\mu|-l(\mu)}\,p_\mu[(1-q)\,(1-t)]\,z_\mu$.
Observe that it is related to the Hall scalar product by
\begin{equation}
  \bleu{\langle f,g\rangle = \langle f,\omega\, \widehat{g}\rangle_\star }, 
 \end{equation}
where $\widehat{g}(\x):= \textstyle g\!\left[\frac{\x}{(1-t)(1-q)}\right]$.
In particular, $\widehat{p_k}(\x)= p_k(\x)/((1-q^k)(1-t^k))$.

\subsection*{Cauchy Kernel and scalar product}
We set
	$$ \bleu{w_\mu(q,t)}:=\bleu{\prod_{c\in \mu} (q^{a(c)} -t^{l(c)+1}) (t^{l(c)} -q^{a(c)+1})},\qquad {\rm and}\qquad 
	 \bleu{\widehat{H}_\mu}:=\bleu{\frac{1}{w_\mu(q,t)} \MacH_\mu},$$
with $a(c)=a_\mu(c)$ (resp. $\ell(c)=\ell_\mu(c)$) standing for the \define{arm} (resp. \define{leg}) of the cell $c$ in $\mu$.
The Cauchy-kernel formula for the modified Macdonald states that
\begin{equation}\label{qt_kernel}
   \bleu{e_n\!\textstyle\left[\frac{\x\cdot\y}{(1-q)(1-t)}\right]} = \bleu{\sum_{\mu\vdash n} \MacH_\mu(\x)\widehat{H}_\mu(\y)}.
 \end{equation}
The Cauchy-kernel formula is equivalent to
 $$\bleu{\langle \MacH_\lambda,\widehat{H}_\mu\rangle_\star}=\bleu{\delta_{\lambda,\mu}}.$$
 In other words, the Macdonald polynomials form an orthogonal family for $\star$-scalar product. It follows from \autoref{qt_kernel}, and some calculations, that 
 \begin{align}
   \bleu{e_n(\x)} &= \bleu{\sum_{\mu\vdash n} \widehat{H}_\mu[(1-t)(1-q)]\,\MacH_\mu(\x)}\label{Mac_expansion_of_en}\\
      &=\bleu{\sum_{\mu\vdash n} (1-t)(1-q)\frac{B_\mu(q,t)\Pi_\mu(q,t) }{w_\mu(q,t)}\,   \MacH_\mu(\x)}\label{Mac_expansion_of_enqt},
 \end{align}
 where $B_\mu(q,t)$ is the \define{cell enumerator} of $\mu$:
 \begin{equation}
    \bleu{B_\mu(q,t)}:=\bleu{\sum_{(i,j)\in \mu} q^it^j}.
 \end{equation}
To finish parsing \autoref{Mac_expansion_of_enqt}, we also define
  \begin{equation}\label{def_Pi_mu}
    \bleu{\Pi_\mu(q,t)}:=\bleu{\prod_{(i,j)\neq (0,0)} (1-q^it^j)}.
 \end{equation}
For the partition  $(3,2)$, then we have $B_{32}(q,t) = 1+q+q^2+t+qt$, which is the sum of the monomials arising in the filling
\begin{center}
 \Yboxdim{15pt}
 \Ylinecolour{blue}
\Ynodecolour{green!50!black}
\Yfillcolour{yellow}
 \young(<1><q><q^2>,<t><qt>)
\end{center}
Moreover, $\Pi_{32}(q,t)=(1-q)(1-q^2)(1-t)(1-qt)$.

 \section{Macdonald Eigenoperators}\label{SecMacOper}
 By definition,  \define{Macdonald eigenoperators} afford as common eigenfunctions the Macdonald polynomials. Among the most interesting ones are those for which the eigenvalue of $\MacH_\mu$ are symmetric functions of the values $q^it^j$, for $(i,j)$ varying in the set of cells of the partition $\mu$. More precisely, for a given symmetric function $f$, we set 
 \begin{equation}\label{def_delta_oper}
     \bleu{f[B]\, \MacH_\mu}:=\bleu{f\big[B_\mu(q,t)\big]\,\MacH_\mu},\qquad {\rm and}\qquad 
      \bleu{f[\overline{B}]\, \MacH_\mu}:=\bleu{f\big[B_\mu(q^{-1},t^{-1})\big]\,\MacH_\mu},
 \end{equation}
In particular $B$ is the Macdonald eigenoperator that sends $\MacH_\mu$ to $B_\mu(q,t)\MacH_\mu$. We also consider the simple operators:
 \begin{equation}\label{def_B_oper}
     \bleu{M\,\MacH_\mu}:= \bleu{(1-q)(1-t)\MacH_\mu},\qquad {\rm and}\qquad 
      \bleu{\overline{M}\MacH_\mu}:= \bleu{(1-q^{-1})(1-t^{-1})\MacH_\mu}.
   \end{equation}
The $f[B]$ (resp. $f[B-1]$) operator has often been denoted by $\Delta_{f}$  (resp. $\Delta'_{f}$). However, it seems more natural to adopt a plethystic style notation for Macdonald eigenoperators, mimicking the behavior of their eigenvalues. We have an obvious homomorphism from the ring $\Lambda_{\x}$ to the ring of  Macdonald eigenoperators:
 \begin{eqnarray*}
     &&\bleu{1[B]}=\bleu{\Id},\\
     &&\bleu{(f\pm g)[B]}=\bleu{f[B]\pm g[B]},\\
     &&\bleu{(f\cdot g)[B]}=\bleu{f[B]\cdot g[B]}.
 \end{eqnarray*}
Two other interesting similar homomorphisms are 
  \begin{equation}
     \bleu{f\mapsto (\omega f)[B-1/M]},\qquad {\rm and}\qquad \bleu{f\mapsto (\omega f)[\overline{B}-1/\overline{M}]}.
  \end{equation}
A ``special case'' is the \define{nabla operator} $\nabla$:
 \begin{equation}
     \bleu{\nabla(\MacH_\mu):=T_\mu(q,t) \,\MacH_\mu}, \qquad {\rm with}\qquad \bleu{T_\mu(q,t):=\prod_{i,j} q^it^j= q^{\eta(\mu')} t^{\eta(\mu)}}.
  \end{equation}
It is easy to observe, using \autoref{mac_sym2}, that $\bleu{\nabla^{-1}=\ \downarrow \nabla \downarrow}$. Moreover, when restricted to homogeneous symmetric functions of degree $n$, we have the operator equality $\nabla=e_n[B]$.

Sometimes the effect of a Macdonald operator $f[B]$ on a specific symmetric function $g(\x)$ is also symmetric in $q$ and $t$. When this is the case, the coefficients of each Schur function in $f[B](g(\x))$ are often themselves Schur positive polynomials in the ``parameters'' $q,t$. In particular, this is so for all the symmetric functions $\nabla^m(e_n)$, which arises as the bigraded Frobenius characteristic (defined below) of the $\S_n$-module of higher diagonal coinvariants. 
For instance, we have 
\begin{eqnarray*}
&&\nabla(e_{{1}})=1\otimes s_{{1}},\\
&&\nabla(e_{{2}})=1\otimes s_{{2}}+s_1\otimes s_{{11}},\\
&&\nabla(e_{{3}})=1\otimes s_{{3}}+ (s_1+s_e)\otimes s_{{21}}+ 
  (s_{11}+s_3)\otimes s_{{111}},\\
&&\nabla(e_{{4}})=1\otimes s_{{4}}+ (s_1+s_2+s_3)\otimes s_{{31}} + (s_2+s_{21}+s_4)\otimes s_{{22}}\\
&&\qquad\qquad + (s_{11}+s_{21}+s_{31}+s_3+s_4+s_5)\otimes s_{{211
}} + (s_{31}+s_{41}+s_6)\otimes s_{{1111}},
\end{eqnarray*}
where we write $s_\mu\otimes s_\lambda$ for $s_\mu(q,t)\,s_\lambda(\x)$. Observe that, for $a\geq b \geq 0$, we have
\begin{align}
  s_{ab}(q,t) = q^at^b+q^{a-1}t^{b+1}+\ldots +q^{b-1}t^{a+1}+ q^bt^a.
\end{align}
It follows from \autoref{special1}, that at $t=1$,   $\nabla$  becomes multiplicative, hence it is entirely characterized by the fact that 
  \begin{equation}
      \bleu{\nabla_{t=1}\ h_n\!\left[{\x}/{(1-q)}\right]= q^{\binom{n}{2}}\,h_n\!\left[{\x}/{(1-q)}\right],}
   \end{equation}
  since $\MacH_\mu(q,1;\x)$ is proportional to $h_\mu\!\left[{\x}/{(1-q)}\right]$. Since $\MacH_\mu(q,1;\x/q)$ is proportional to $s_\mu\!\left[{\x}/{(1-q)}\right]$, we also have
   \begin{equation}
      \bleu{\nabla_{t=1/q}\ s_\mu\!\left[{\x}/{(1-q)}\right]= q^{\eta(\mu')-\eta(\mu)}\,s_\mu\!\left[{\x}/{(1-q)}\right].}
   \end{equation}

  \section{Elliptic Hall Algebra Operators}\label{sec_elliptic} 
We can now describe an operator realization $\Hall$ of the elliptic Hall algebra over the field $\mathbb{K}=\Rational(q,t)$. This is a $\Z^2$-graded algebra
    \begin{equation}
       \bleu{\Hall}=\bleu{\bigoplus_{(k,n)\in \Z^2} \Hall^{(k,n)}}.
    \end{equation}
Elements of $\Hall^{(k,n)}$ act as operators on the degree-graded algebra $\Lambda=\oplus_{j} \Lambda_j$ of symmetric functions over $\mathbb{K}$, sending the graded component $\Lambda_j$ to $\Lambda_{j+n}$. In view of the following discussion, it is handy to set $\HallOper{\Lambda_d}{a}{b}:=\Hall^{(k,n)}$, with $d=\gcd(k,n)$ and $(k,n)=(ad,bd)$.  
The algebra $\Hall$ is  generated  by certain commutative\footnote{This is one of the basic properties of elliptic algebras, which is here considered as a fact.} subalgebras, 
     \begin{equation}
       \bleu{\HallOper{\Lambda}{a}{b}}=\bleu{\bigoplus_{d\in\N} \HallOper{\Lambda_d}{a}{b}},
    \end{equation}
indexed by coprime integers (corresponding to ``rays'' in the discrete plane), and considered to be graded by $d$.
Each of these graded algebra is isomorphic to the graded algebra $\Lambda$ of symmetric functions, which itself may be identified with $\HallOper{\Lambda}{0}{1}=\Lambda^{\bullet}$ (modulo that we turn any symmetric functions $f$ into the operator $f^\bullet$). We denote by $\HallOper{f}{a}{b}$ the operator associated via this isomorphism to $f\in\Lambda_d$. 

To summarize, $\HallOper{f}{0}{1}=f^\bullet$,  for all $g\in \Lambda_j$ the symmetric function $\HallOper{f}{a}{b}\cdot g$ lies in $\Lambda_{bd+n}$, and we have
\begin{equation}
 \begin{split}
    &\bleu{\HallOper{(\alpha f+\beta g)}{a}{b}} =\bleu{\alpha\, \HallOper{f}{a}{b}+\beta \,\HallOper{g}{a}{b}},\qquad \alpha,\beta\in\mathbb{K}\\
    &\bleu{\HallOper{(f\cdot g)}{a}{b}} =\bleu{\HallOper{f}{a}{b} \circ \HallOper{g}{a}{b}},
 \end{split}
 \end{equation}
 where ``$\circ$'' denotes composition of (commuting) operators.
Hence, for any basis $\big(g_\mu(\x)\big)_\mu$  of $\Lambda_d$, we have
  \begin{equation}\label{IFF_rays}
      \bleu{\HallOper{f}{a}{b}}=\bleu{\sum_{\mu\vdash d} c_\mu(q,t) \HallOper{g_\mu}{a}{b}}\qquad {\rm iff}\qquad 
      \bleu{f(\x)}=\bleu{\sum_{\mu\vdash d} c_\mu(q,t) g_\mu(\x)\in \Lambda_d},
  \end{equation} 
For sure, operators associated to different  coprime pairs will not commute in general.

As it is the only portion that is needed for our purpose,  we now on restrict our description to the positive part of $\Hall$, {\sl i.e.}  $a$ and $b$ are $\geq 1$.
We underline that it is natural to identify  $\HallOper{\Lambda}{0}{1}$ with $\Lambda$.
The most amenable operators for direct calculation are the $\HallOper{\pi_\mu}{a}{b}$ described further below. They are multiplicative, {\sl i.e.:} we have
\begin{equation}
    \bleu{\HallOper{\pi_\mu}{a}{b}} = \bleu{\prod_{k\unepart \mu} \HallOper{\pi_k}{a}{b}},
\end{equation}
for any partition $\mu$. In view of \autoref{IFF_rays}, this will allow us to obtain all operators by setting:
\begin{equation}\label{DefiningRule}
     \bleu{\HallOper{f}{a}{b}}:=\bleu{\sum_{\mu\vdash d} c_\mu(q,t) \, \HallOper{\pi_\mu}{a}{b}},
\end{equation}
via an explicit calculation of the coefficients $c_\mu(q,t)$ of the expansion of $f(\x)$ in the $\pi_\mu(\x)$ basis (which we may identify to $\HallOper{\pi_\mu}{0}{1}$).

\subsection*{Definition of \texorpdfstring{$\HallOper{\pi_d}{a}{b}$}{pi}}   For any pair $(k,n)$, with $k$ and $n$ integers $\geq 1$, consider $\gcd(k,n)=d$  and $(a,b)$ such that $(k,n)=(ad,bd)$.
For the purpose of the following construction, it is handy to write $\xkn{k}{n}$ for $ \HallOper{\pi_d}{a}{b}$. 
For any coprime $a$ and $b$ in $\mathbb{N}^+$, there is a unique pair of ``vectors'' $(r,s)$ and $(u,v)$ in $\N^2$, such that
\begin{enumerate}
 \item $\bleu{(a,b)=(r,s)+(u,v)}$, and
 \item $\bleu{\det \begin{pmatrix} r& s\\ u & v\end{pmatrix} =1}$.
\end{enumerate}
As illustrated in \autoref{Fig_Split}, this corresponds to the pair of integral points that are closest to the line joining the origin to $(a,b)$.  
With this pair at hand, and setting $(r',s'):=(k-r,n-s)$ ,we say that the matrix $\left(\begin{smallmatrix}r&s\\r' &s'\end{smallmatrix}\right)$ \define{split} $(k,n)$. Consider the operator $D_0$, which maps  $\MacH_\mu$ to $(1-MB_\mu(q,t))\MacH_\mu$,  
which may also be described directly (using \autoref{defDn}) without previous knowledge of the Macdonald polynomials.
Starting with $\xkn{1}{0}=D_0$ and $\xkn{0}{1}=\pi_1^\bullet$ as initial values, we recursively set
\begin{equation}\label{Def_pi_kn}
   \bleu{\xkn{k}{n}}:=\bleu{\frac{1}{(1-q)(1-t)}\, [\xkn{r'}{s'},\xkn{r}{s}]}.
\end{equation}
Here $[-,-]$ denotes the usual Lie bracket.  
  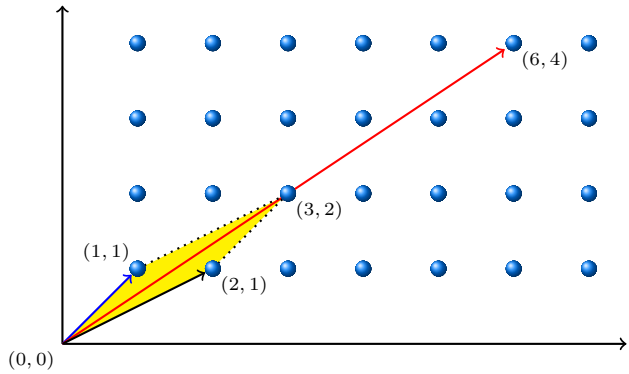
\begin{figure}[ht]
 \begin{tikzpicture}[scale=1]
\coordinate (0) at (0,0);
\coordinate (11) at (1,1);
\coordinate (32) at (3,2);
\coordinate (21) at (2,1);
\coordinate (64) at (6,4);
\draw[draw = white,fill=yellow] (0) -- (11) -- (32)--(21) --cycle;
\draw[->,thick] (0,0) -- (0,4.5);
\draw[->,thick] (0,0) -- (7.5,0);
 \draw[->,color=blue,thick] (0) -- ($(0)!.92!(11)$);
  \draw[->,color=red,thick] (0) -- ($(0)!.98!(64)$);
  \draw[color=black,thick,dotted] (11) -- (32);
 \draw[->,color=black,thick] (0) -- ($(0)!.95!(21)$);
 \draw[color=black,thick,dotted] (21) -- (32);
\foreach \x in {1,...,7}
    \foreach \y in {1,...,4}
    {
    \shade[ball color=azure]  (\x,\y) circle (3pt);
    }
 \begin{tiny}
 \draw[anchor=north west] (64) node {$(6,4)$};
 \draw[anchor=north west] (32) node {$(3,2)$};
 \draw[anchor=north west] (21) node {$(2,1)$};
  \draw[anchor=south east] (11) node {$(1,1)$};
    \draw[anchor=north east] (0) node {$(0,0)$};
\end{tiny}
\end{tikzpicture}
	\caption{Example of a splitting.}
	\label{Fig_Split}
 \end{figure}
It is easy to check that we have the above splitting if and only if, for any $k,j\in \N$, 
   \begin{equation}
     \bleu{\begin{pmatrix} 1&j\\0 &1\end{pmatrix}\begin{pmatrix} r&s\\r' &s'\end{pmatrix}}\  {\rm splits}\  \bleu{(k+jn,n)},
     \quad{\rm and}\quad
     \bleu{\begin{pmatrix} 1&0\\k &1\end{pmatrix}\begin{pmatrix} r&s\\r' &s'\end{pmatrix}}\  {\rm splits}\  \bleu{(k,n+jk)}.
  \end{equation}
This ties in nicely with the nice following property of the operators (shown to hold in~\cite{gorsky}):
\begin{equation}\label{PropConjNabla}
       \bleu{\nabla \HallOper{f}{a}{b}\nabla^{-1}}= \bleu{\HallOper{f}{a+b}{b}},
\end{equation}
involving the Macdonald eigenoperator $\nabla$ (see~\autoref{SecMacOper}), which plays a prevalent role in this subject (see~\cite{nabla,HHLRU,haimanJAC}). It is possible to  express all the $\xkn{k}{n}$ (for $1\leq k,n$) as more ``compact'' Lie bracketing expressions involving only the operators $\Dk{j}:=\xkn{1}{j}$. This results in more efficient computer algebra calculations.

\subsection{Explicit examples} Since $(1,k)$ splits as $\left(\begin{smallmatrix}1&k-1\\0 &1\end{smallmatrix}\right)$, for all $k$ we have
\begin{align}\label{X_1k}
   \bleu{\Dk{k}}&=\bleu{\xkn{1}{k}}\\
   &=\bleu{\frac{1}{M}\, [\xkn{0}{1},\xkn{1}{k-1}]}\\
   &=\bleu{\frac{1}{M^k}[\underbrace{p_1^\bullet,[p_1^\bullet,\cdots [p_1^\bullet}_{k\ {\rm copies}},\Dk{0}]\cdots ]]}.\label{X_k_crochet}
\end{align}
Similarly, we get 
\begin{align}\label{X_k1}
   \bleu{\xkn{k}{1}}&=\bleu{\frac{1}{M^k}[[\cdots [p_1^\bullet,\underbrace{\Dk{0}],\cdots \Dk{0}],\Dk{0}}_{k\ {\rm copies}}]}.
\end{align}
These expression may be represented as binary trees (see \autoref{fig_tree}) with operators $p_1^\bullet$ as left leaves, and $\Dk{0}$ as right ones.
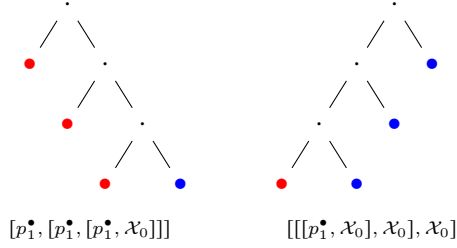
\begin{figure}
\begin{tikzpicture}[level distance=.8cm,
  level 1/.style={sibling distance=1cm},
  level 2/.style={sibling distance=1cm}]
  \node {$\cdot$}
    child {node {\rouge{$\bullet$}}}
    child {node {$\cdot$}
             child {node {\rouge{$\bullet$}}}
             child {node {$\cdot$}
             		child {node {\rouge{$\bullet$}}}
    	    		child {node {\bleu{$\bullet$}}}}};
\begin{tiny}
 \node at (0.3,-3) {$[p_1^\bullet,[p_1^\bullet,[p_1^\bullet,\Dk{0}]]]$};
 \end{tiny}
\end{tikzpicture}
\qquad
\begin{tikzpicture}[level distance=.8cm,
  level 1/.style={sibling distance=1cm},
  level 2/.style={sibling distance=1cm}]
  \node {$\cdot$}
    child {node {$\cdot$}
             child {node {$\cdot$}
             		child {node {\rouge{$\bullet$}}}
             		child {node {\bleu{$\bullet$}}}}
	    child {node {\bleu{$\bullet$}}}
    	    }
    child {node {\bleu{$\bullet$}}};
   \begin{tiny}
 \node at (-0.3,-3) {$[[[p_1^\bullet,\Dk{0}],\Dk{0}],\Dk{0}]$};
 \end{tiny}
\end{tikzpicture}
\caption{Brackettings (red nodes $=p_1^\bullet$, and blue nodes $=\Dk{0}$).}
\label{fig_tree}
\end{figure}
The binary tree expression for $\xkn{k}{n}$ and $\xkn{n}{k}$ are related to one another by a vertical reflexion symmetry, as illustrated in a special case in \autoref{fig_tree}.  Moreover, 
 \begin{equation} \label{nabla_e1}
    \bleu{\nabla p_1^\bullet\nabla^{-1}} = \bleu{\frac{1}{M}\,[p_1^\bullet,\Dk{0}] }=\bleu{\Dk{1}},
    \end{equation}
together with equations \pref{X_k_crochet} and \pref{PropConjNabla}, gives
\begin{equation}\label{nabla_D1}
    \bleu{\xkn{k+1}{k}} = \bleu{\nabla \Dk{k}\nabla^{-1}}=\bleu{[\underbrace{\Dk{1},[\Dk{1},\cdots [\Dk{1}}_{k\ {\rm copies}},\Dk{0}]\cdots ]]}.
 \end{equation}
In other terms, conjugation by the operator $\nabla$ corresponds to replacing each $p_1^\bullet$ by $\Dk{1}$. Thus, in the binary tree representation, each left leaf is replaced by the tree for $[p_i^\bullet,\Dk{0}]$: 
$$\begin{tikzpicture}
  \node at (-.3,1) {\rouge{$\bullet$}};
  \node at (.8,1) {\huge $\rightsquigarrow$};
  \node at (0,.5) {};
\end{tikzpicture}
\begin{tikzpicture}
[level distance=.8cm,
  level 1/.style={sibling distance=1cm},
  level 2/.style={sibling distance=.8cm}]
  \node {$\cdot$}
     child {node {\rouge{$\bullet$}}}
     child {node {\bleu{$\bullet$}}};
\end{tikzpicture}$$

\subsection*{Interesting families}
Physicists would say that the $\HallOper{f}{a}{b}$ are ``creation'' operators. Indeed, we typically apply them to the symmetric function $1$, in order to construct the symmetric functions $(\HallOper{f}{a}{b}\cdot 1)$. We need only express $f(\x)$ in the $\pi_\mu(\x)$ to get the coefficient needed to expand $(\HallOper{f}{a}{b}\cdot 1)$ in terms of $(\HallOper{\pi_\mu}{a}{b}\cdot 1)$. For sure, relations between operators translate into identities between the resulting families.  Assume that $f=f_d$ is some chosen symmetric function of degree $d$. We denote by $f_{kn}(q,t;\x)=f_{(k,n)}(q,t;\x)$ the symmetric function $(\HallOper{f_d}{a}{b}\cdot 1)$, still with the assumption that $(k,n)=(ad,bd)$ with $(a,b)$ coprime. Observe that it directly follows from \autoref{PropConjNabla}, that we have
\begin{equation}
  \bleu{\nabla (f_{(k,n)}(q,t;\x))}=\bleu{ f_{(k+n,n)}(q,t;\x)}.
\end{equation}
In particular, since $f_{(0,d)}(q,t;\x) =f_{d}(\x)$, we get 
\begin{equation}\label{Nabla_fd}
  \bleu{\nabla (f_{d}(\x))}=\bleu{ f_{(d,d)}(q,t;\x)}.
\end{equation} 
We would like to choose $f_d$, so that $f_{kn}(q,t;\x)$ is Schur positive for all $k$ and $n$. Experimental data suggest that this is the case when $f_d$ is either  $\pi_d$, $\widehat{p}_d=(-1)^{d-1}p_d$, $e_d(\x)$, or $\widehat{h}_d(\x):=(-qt)^{-d+1} h_d(\x)$ (among others).
Associated functions via the above creation procedure are respectively denoted by
\begin{equation}\label{InterestingFamilies}
   \bleu{\pi_{kn}(q,t;\x)},\qquad
   \bleu{\widehat{p}_{kn}(q,t;\x)},\qquad 
   \bleu{e_{kn}(q,t;\x)},\qquad {\rm and}\qquad 
   \bleu{\widehat{h}_{kn}(q,t;\x)}.
 \end{equation}
A further interesting family is associated to the renormalized Schur functions
$$\bleu{\widehat{s}_\mu(\x)}:=\bleu{\frac{(-1)^{\iota(\mu)}}{(qt)^{n-l(\mu)}} s_\mu(\x)},$$
where $\iota(\mu)$ stands for the number of cells $(i,j)$ in $\mu$ for which $i>j$. One of the longest-standing conjectures about  the $\nabla$ operator may be extended\footnote{The original version, stated in 1994 when $\nabla$ was introduced, was for $a=b=1$.} as follows.
\begin{conjecture}[see~\cite{BergeronOpen}] \label{ConjNablaSchur}
  For all $\mu$ and all $(a,b)$ coprime, the symmetric function $(\HallOper{\widehat{s}_\mu}{a}{b}\cdot 1)$ has a Schur expansion in the $\x$ variables, with $(q,t)$-coefficients that are $(q,t)$-Schur positive.
\end{conjecture}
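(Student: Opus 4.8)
The cleanest conceivable route is representation-theoretic. One would seek to exhibit $(\HallOper{\widehat{s}_\mu}{a}{b}\cdot 1)$ as the bigraded Frobenius characteristic of a naturally occurring doubly graded $\S_N$-module (with $N=bd$ and $d=|\mu|$), so that the coefficient of each $s_\lambda(\x)$ becomes the bigraded multiplicity of the irreducible indexed by $\lambda$ --- and such multiplicities are \emph{a priori} polynomials in $q,t$ with nonnegative integer coefficients, which is exactly $(q,t)$-Schur positivity. The plan is therefore first to produce the right model: the natural candidates come from the cohomology of the Hilbert scheme of points in the plane and, for general coprime $(a,b)$, from the cohomology of affine Springer fibres or compactified Jacobians attached to the $(a,b)$-torus singularity, following the link-homology dictionary (Oblomkov--Rasmussen--Shende, Gorsky--Negut--Rasmussen) that motivates these very notes.

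Before attacking the full two-parameter family, I would exploit the elliptic Hall algebra symmetries to shrink the problem. The conjugation identity \eqref{PropConjNabla}, $\nabla\,\HallOper{f}{a}{b}\,\nabla^{-1}=\HallOper{f}{a+b}{b}$, together with the transpose symmetry $\MacH_{\mu'}(q,t;\x)=\MacH_\mu(t,q;\x)$, generates an action under which the two moves $(a,b)\mapsto(a+b,b)$ and $(a,b)\mapsto(a,a+b)$ let one descend, by a Stern--Brocot / continued-fraction recursion, from any coprime pair to the generator $(1,1)$. By \eqref{Nabla_fd} the case $(a,b)=(1,1)$ is precisely $\nabla(\widehat{s}_\mu)$. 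The essential caveat is that this reduction relates \emph{operators}, not positivity: conjugation by $\nabla$ does not preserve Schur positivity of the resulting symmetric functions (indeed $\nabla$ famously does not), so the descent organizes the family and identifies $(1,1)$ as the crux, but it does not by itself propagate positivity inward.

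This already locates the main obstacle, and it is a severe one: the reduced statement --- $(q,t)$-Schur positivity of $\nabla(\widehat{s}_\mu)$ for arbitrary $\mu$ --- is itself a celebrated open problem, of which only $\mu=1^n$ (giving $\nabla e_n$, the Frobenius characteristic of the diagonal coinvariants, proved by Haiman via the Hilbert scheme \cite{haimanJAC}) and scattered further cases are established. The hard part is thus not the passage to general $(a,b)$ but the construction, \emph{uniformly in $\mu$}, of an honest module or coherent sheaf whose bigraded character equals $\nabla(\widehat{s}_\mu)$; no such construction is presently known outside the hook and column cases.

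The combinatorial alternative faces the mirror-image difficulty. Mellit's proof of the rational (compositional) shuffle theorem yields, for the generator on a single ray --- essentially the family $e_{kn}(q,t;\x)$ --- an expansion as a positively weighted sum of LLT polynomials indexed by lattice paths in the $a\times b$ rectangle, and LLT polynomials are Schur positive by Grojnowski--Haiman; this gives Schur positivity of those members outright. To reach the Schur-indexed creation operators $\HallOper{\widehat{s}_\mu}{a}{b}$ one would first expand $\widehat{s}_\mu$ in the $\pi_\nu$ basis as in \eqref{DefiningRule} and assemble the corresponding combination of path models, and then, crucially, upgrade ``Schur positive in $\x$'' to ``$(q,t)$-Schur positive'', that is, establish that the $(q,t)$-coefficients are themselves positive in the Schur basis of the parameter alphabet. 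It is exactly this simultaneous double positivity, together with the absence of a manifestly positive combinatorial model attached to a general Schur index rather than to the elementary generator, where every current approach stalls; I would expect any genuine proof to require a new geometric incarnation of the Schur-indexed operators that makes the bigrading visibly a character.
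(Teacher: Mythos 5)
The statement you were asked to prove is presented in the paper as \autoref{ConjNablaSchur}, that is, as an open \emph{conjecture} extending the 1994 conjecture on $\nabla(e_n)$ from~\cite{nabla}; the paper offers no proof, only supporting evidence (the tables of $\nabla(\widehat{s}_\mu)$ for $|\mu|\le 4$, and the remark that Schur positivity of the families in \pref{InterestingFamilies} would follow from it via the positive expansions of $\pi_d$ and $\widehat{p}_d$ in the hook-indexed $\widehat{s}_{(k,1^{n-k})}$). Your submission is accordingly not a proof but a strategy survey, and to your credit you say so explicitly: you correctly identify that even the single case $(a,b)=(1,1)$, namely $(q,t)$-Schur positivity of $\nabla(\widehat{s}_\mu)$ for general $\mu$, is itself open outside of special shapes. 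So the ``gap'' here is the entire content of the statement, and neither you nor the paper closes it; there is no paper proof against which to compare your route.

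One concrete inaccuracy in the reduction you sketch is worth flagging. A Stern--Brocot descent from an arbitrary coprime $(a,b)$ to $(1,1)$ requires both elementary moves $(a,b)\mapsto(a+b,b)$ and $(a,b)\mapsto(a,a+b)$. Only the first is realized at the level of these operators, by $\nabla$-conjugation \pref{PropConjNabla}; the second would amount to exchanging the two gradings of the elliptic Hall algebra, and it is \emph{not} implemented by the $q\leftrightarrow t$ transpose symmetry of Macdonald polynomials. The paper itself notes that $e_{kn}(\x)\neq e_{nk}(\x)$ in general (the two are only expected to agree on hook components), so the symmetric functions attached to $(a,b)$ and $(b,a)$ genuinely differ and the descent to $(1,1)$ is not available in the form you state. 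As you already observe, this is moot for positivity, since conjugation by $\nabla$ would not transport Schur positivity in any case; but it also means the family cannot even be \emph{organized} around the single node $(1,1)$. Each coprime ray must be handled on its own, which is consistent with the shape of the known results you cite: Haiman's Hilbert-scheme proof for $\nabla e_n$, and the rational shuffle theorem for the $e_{kn}$, which yields Schur positivity in $\x$ via LLT positivity but not the two-parameter $(q,t)$-Schur refinement demanded here. Your diagnosis of where every current approach stalls --- the absence of a module or sheaf whose bigraded character realizes $\HallOper{\widehat{s}_\mu}{a}{b}\cdot 1$ uniformly in $\mu$ --- is accurate, but it is a description of the open problem, not a step toward its resolution.
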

Indeed, in view of \autoref{Nabla_fd}, the original conjecture about $\nabla$ (which may be found in~\cite{nabla}) corresponds to the special case $(a,b)=(1,1)$. Moreover, since $e_{kn}$ and $\widehat{h}_{kn}$ are but special instances of it, and we have the positive expansions
   $$\bleu{\pi_d=\sum_{k=1}^{n} \widehat{s}_{(k,1^{n-k})}},\qquad{\rm and}\qquad
      \bleu{\widehat{p}_d=\sum_{k=1}^{n} (qt)^{\iota(\mu)} \widehat{s}_{(k,1^{n-k})}}.
   $$
Thus, \autoref{ConjNablaSchur} implies that all the families of \pref{InterestingFamilies} are Schur positive.

\subsection{Some values} To illustrate special instances of the above conjecture, for partitions of $n\leq 4$ and encoding products $s_\lambda(q,t) s_\mu(\x)$ as $s_\lambda \otimes s_\mu$, we have the following values for 
$\nabla(\widehat{s}_\mu):=(\HallOper{\widehat{s}_\mu}{1}{1}\cdot 1)$:
\begin{small}
\begin{align*}
&\nabla(\widehat{s}_1)= 1\otimes s_{1},\\
&\nabla(\widehat{s}_2) = 1\otimes s_{11},\\
&\nabla(\widehat{s}_{11})= s_1\otimes s_{11} + 1\otimes s_{2},\\
&\nabla(\widehat{s}_3)= s_1\otimes s_{111} + 1\otimes  s_{21},\\
&\nabla(\widehat{s}_{21})=  s_2\otimes s_{111} + s_1\otimes s_{21},\\
&\nabla(\widehat{s}_{111}) = (s_{11}+s_3)\otimes s_{111} + (s_1+s_2)\otimes s_{21} + 1\otimes  s_{3},\\
&\nabla(\widehat{s}_4) = \big( s_{11} + s_{3} \big) \otimes s_{1111}+ \big( s_{1} + s_{2} \big) \otimes s_{211}+ s_{1} \otimes s_{22}+ 1 \otimes s_{31},\\
&\nabla(\widehat{s}_{31}) =\big( s_{21} + s_{4} \big) \otimes s_{1111} + \big( s_{11} + s_{2} + s_{3} \big) \otimes s_{211} + s_{2} \otimes s_{22} +s_{1} \otimes s_{31},\\
&\nabla(\widehat{s}_{22}) =s_{11} \otimes s_{1111} + s_1 \otimes s_{211} + 1 \otimes s_{31},\\
&\nabla(\widehat{s}_{211}) =\big( s_{31} + s_{5} \big) \otimes s_{1111} +\big( s_{21} + s_{3} + s_{4} \big) \otimes s_{211}+ \big( s_{11} + s_{3} \big) \otimes s_{22} + s_{2} \otimes s_{31},\\
&\nabla(\widehat{s}_{1111}) =\big( s_{31} + s_{41} + s_{6} \big) \otimes s_{1111}+\big( s_{11} + s_{21} + s_{3} + s_{31} + s_{4} + s_{5} \big) \otimes s_{211}\\
&\qquad\qquad\qquad
+\big( s_{2} + s_{21} + s_{4} \big) \otimes s_{22}
+\big( s_{1} + s_{2} + s_{3} \big) \otimes s_{31}
+1\otimes s_{4}.
\end{align*}
\end{small}

\section{Catalan Combinatorics of triangular partitions}\label{sec_combinatorial}
 We now consider weighted enumeration of some relevant families of combinatorial objects. The simplest case corresponds to the $(q,t)$-polynomial which arises as the $0$-degree component of the superpolynomial of the $(n,n+1)$-torus knot. As it happens, Catalan numbers occur as the specialization at $q=t=1$ of this polynomial. Catalan combinatorics concerns generalizations of this original context.  For our presentation we will adopt the new perspective of ``triangular partitions'' which makes many combinatorial notions more natural.
 
 Recall that there is a wide variety of combinatorial structures enumerated by Catalan numbers (see \cite{stanley}). The context that will be useful for us is the set of partitions that are contained in the staircase shape $\delta_n=(n-1,n-2,\ldots,2,1,0)$. For instance, with $n=4$, we have the $14$ (diagram of) partitions of \autoref{Figure_Ferrers}, with $\varepsilon$ denoting the empty partition.
 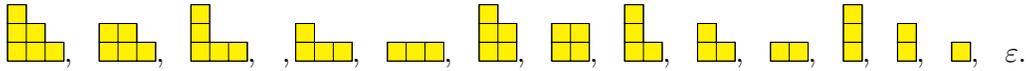
\begin{figure}[ht]
 $$\begin{array}{cccccccccccccccc} 
\begin{tikzpicture}[scale=.25] 
\ylw
\tyng(0cm,0cm,3,2,1);
\end{tikzpicture}, 
&
\begin{tikzpicture}[scale=.25] 
\ylw
\tyng(0cm,0cm,3,2);
\end{tikzpicture}, 
&
\begin{tikzpicture}[scale=.25] 
\ylw
\tyng(0cm,0cm,3,1,1);
\end{tikzpicture}, 
&, 
\begin{tikzpicture}[scale=.25] 
\ylw
\tyng(0cm,0cm,3,1);
\end{tikzpicture}, 
&
\begin{tikzpicture}[scale=.25] 
\ylw
\tyng(0cm,0cm,3);
\end{tikzpicture}, 
&
\begin{tikzpicture}[scale=.25] 
\ylw
\tyng(0cm,0cm,2,2,1);
\end{tikzpicture}, 
&  
\begin{tikzpicture}[scale=.25] 
\ylw
\tyng(0cm,0cm,2,2);
\end{tikzpicture}, 
&
\begin{tikzpicture}[scale=.25] 
\ylw
\tyng(0cm,0cm,2,1,1);
\end{tikzpicture},
&
\begin{tikzpicture}[scale=.25] 
\ylw
\tyng(0cm,0cm,2,1);
\end{tikzpicture},
&
\begin{tikzpicture}[scale=.25] 
\ylw
\tyng(0cm,0cm,2);
\end{tikzpicture},
&
\begin{tikzpicture}[scale=.25] 
\ylw
\tyng(0cm,0cm,1,1,1);
\end{tikzpicture}, 
&
\begin{tikzpicture}[scale=.25] 
\ylw
\tyng(0cm,0cm,1,1);
\end{tikzpicture}, 
&
\begin{tikzpicture}[scale=.25] 
\ylw
\tyng(0cm,0cm,1);
\end{tikzpicture}, 
&
\varepsilon.
\end{array}$$
\caption{Partitions contained in $321$.}
\label{Figure_Ferrers}
\end{figure}
More generally, we consider the set $\mathcal{D}_{\tau}$ of partitions contained in a given ``triangular partition'' $\tau$. This set of partitions is then enumerated with respect to two statistics, giving the polynomial
\begin{equation}
   \bleu{\mathcal{D}_{\tau}(q,t)}:=\bleu{\sum_{\mu\subseteq \tau} q^{\area_\tau(\mu)} t^{\mathrm{sim}_\tau(\mu)}}.
\end{equation}
Here \define{$\area_\tau(\mu)$} is simply the number of cells of $\tau$ which do not lie in $\mu$. The second statistic\footnote{Previously called ``dinv'' in the literature.}, called $\mathrm{sim}(\mu)$ and which is described below, measures how ``similar'' $\mu$ is to $\tau$.

A partition $\tau=\tau_1\tau_2\cdots \tau_k$ is said to be \define{triangular} if there exists positive real numbers $r$ and $s$ such that 
$$\bleu{\tau_j} =  \bleu{\big\lfloor r-{j\,r}/{s}\big\rfloor},$$
with $j$ running over integers that are less or equal to $s$. For sure some of the $\tau_j$ may vanish, and such $0$ parts are usually removed. In other words, as illustrated in \autoref{fig_triangular}, the cells of the partition $\tau$ are those that lie entirely below the \define{diagonal} line joining
$(0,s)$ and $(r,0)$. 
\autoref{TableTriangularPartitions} displays all triangular partitions of $m\leq 6$.
\begin{table}[ht]
\ylw
 \Yboxdim{.2cm}
\begin{align*}
&0,\quad \yng(1),\quad \yng(2),\yng(1,1),\quad \yng(3),\yng(2,1),\yng(1,1,1),\quad \yng(4),\yng(3,1),\yng(2,1,1),\yng(1,1,1,1),\\
&\yng(5),\yng(4,1),\yng(3,2),\yng(2,2,1),\yng(2,1,1,1),\yng(1,1,1,1,1),\quad \yng(6),\yng(5,1),\yng(4,2),\yng(3,2,1),\yng(2,2,1,1),\yng(2,1,1,1,1),\yng(1,1,1,1,1,1).
\end{align*}
\caption{All triangular partitions, for $n\leq 6$.}\label{TableTriangularPartitions}
\end{table}

An (positive coordinates) orthogonal vector to this line is said to be a \define{slope vector} of the partition. We may assume this vector is normalized so that it is of the form $(t,1-t)$, for $0\leq t\leq 1$.
\begin{figure}[ht]
\begin{tikzpicture}[scale=.5]
\draw[blue,opacity=.4] (0,6.5) --(9.8,0);
\draw[step=1,gray,thin] (0,0) grid (10,7);
\tyng(0cm,0cm,8,6,5,3,2);
\node at (-1,6.5) {$s$};
\filldraw[black] (0,6.5) circle (3pt);
\node at (10,-.8) {$r$};
\filldraw[black] (9.8,0) circle (3pt);
\draw[->,color=red,line width=.7mm] (4.5,3.5)-- ($(4.5,3.5)+(6.5/7,9.8/7)$);
\node at ($(4.5,3.5)+(8/7,11/7)$) {$\overline{v}$};
\end{tikzpicture}
\caption{A triangular partition, and slope vector.}
\label{fig_triangular}
\end{figure}
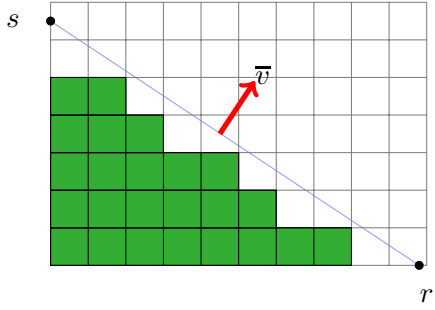

For any hook shape $(a\,|\,\ell):=(a+1,1^\ell)$ one considers the two slope vectors $v'=(t',1-t')$ and $v''=(t'',1-t'')$ (as illustrated  in \autoref{fig_hook_slopes})  of the two lines: 
\begin{itemize}
\item $d'$ which joins the vertices $(1,\ell+1)$ and $(a+2,1)$, and 
\item $d''$ which joins the vertices $(1,\ell+2)$ and $(a+1,1)$.
\end{itemize}
 It is easy to check that
\begin{equation}\label{def_slope_t}
\bleu{t'}= \bleu{\frac{\ell}{a+\ell+1}},\qquad {\rm and}\qquad \bleu{t''} = \bleu{\frac{\ell+1}{a+\ell+1}}.
\end{equation}
 Clearly, for any $t'<t<t''$, the vector $(t,1-t)$ lies in the convex cone having as bounding rays positive multiples of the vectors $v'$ and $v''$. Informally, vectors in this cone may be considered as slope vectors for hypothenuse of discrete approximations of the "right triangle" having height $(\ell+1)$ and basis $(a+1)$. We may thus consider that $(t',t'')$ is the slope interval of the hook. Moreover, two hooks are said to be \define{similar} if the intersection of their slope interval is not empty. In other words, they share a common slope vector.
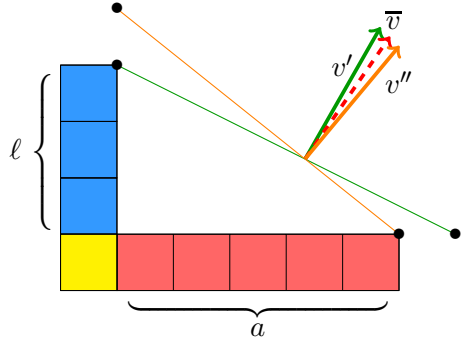
\begin{figure}[ht]
 \begin{tikzpicture}[scale=.75]
    \Yfillcolour{azure!80}
   \tyng(0cm,1cm,1,1,1);
\Yfillcolour{red!60}
   \tyng(1cm,0cm,5);
\Yfillcolour{yellow}
   \tyng(0cm,0cm,1);
     \node at (-.5,2.5) {$\ell\left.\rule{0cm}{34pt}\right\{$};
 \node at (3.5,-0.5) {$\underbrace{\hskip3.5cm}_{\textstyle a}$};
  \draw[color=green] (1,4) -- (7,1);
 \draw[color=orange] (1,5) -- (6,1);
 \node at (1,5) {$\bullet$};
 \node  at (1,4) {$\bullet$};
 \node at (6,1) {$\bullet$};
 \node at (7,1) {$\bullet$};
 \draw[->,color=green,line width=.5mm] ($(4.34,2.34)+(0,0)$)--($(4.34,2.34)+(4/3,7/3)$);
 \draw[->,dashed,color=red,line width=.5mm] ($(4.34,2.34)+(0,0)$)--($(4.34,2.34)+(9/6,13/6)$);
 \draw[->,color=orange,line width=.5mm] ($(4.34,2.34)+(0,0)$)--($(4.34,2.34)+(5/3,6/3)$);
 \node at (5,4) {$v'$};
  \node at (5.9,4.8) {$\overline{v}$};
  \node at (6,3.7) {$v''$};
  \end{tikzpicture}
  \caption{The two extreme slope vectors (here upscaled) of a hook shape $(a\,|\,\ell)$.}
  \label{fig_hook_slopes}
  \end{figure}

When we apply this construction to a cell $c$ of a partition $\mu$, we denote by $t'(c,\mu)$ and $t''(c,\mu)$ the quantities given by \autoref{def_slope_t}, with $a$ and $\ell$ respectively standing for the arm and leg of $c$ in $\mu$. The \define{slope cone} $\mathrm{SC}(\mu) $ of a partition $\mu$ is the (possibly empty) set 
\begin{equation}\label{def_t_mu}
   \bleu{\mathrm{SC}(\mu)} := \bleu{\{t\,e_1+(1-t)\,e_2 \ |\ t^{-}_\mu < t< t^{+}_\mu\}},
\end{equation}
with 
\begin{equation}\label{def_t_mu}
 \bleu{t^{-}_\mu} = \bleu{\max_{c\in\mu}\ t'(c,\mu)},\qquad {\rm and}\qquad 
\bleu{t^{+}_\mu} = \bleu{\min_{c\in\mu}\ t''(c,\mu)}.
\end{equation}
One may show that a partition is triangular if and only if $\mathrm{SC}(\mu)$ is not empty. Equivalently, $\tau$ is triangular if and only if the (open) \define{slope interval} $\mathrm{sc}(\tau):=(t^{-}_\tau,t^{+}_\tau)$ is not empty. In other words, all the hooks of cells in $\tau$ are similar. 
For example for the three cells of $\tau=21$, we calculate that 
\begin{table}[ht]\renewcommand{\arraystretch}{1.5}
\begin{tabular}{|c||c|c|c|c|}
\hline
\rowcolor{yellow} cell&(0,0)&(1,0)&(0,1)\\
\hline\hline
\cellcolor{yellow}$\frac{\ell}{a+\ell+1}$ &1/3&0&0\\ 
\hline
\cellcolor{yellow}$\frac{\ell+1}{a+\ell+1}$ &2/3&1&1\\ 
\hline
\end{tabular}
\medskip
\caption{Rectangular Catalan numbers $\Cat_{rs}$.}\label{tab2}
\end{table}
Hence, the slope interval of this partition is $(1/3,2/3)$. 

Now, consider a subpartition $\mu$ of a given triangular partition $\tau$. Consider the set of cells $c$ of $\mu$ that have hooks in $\mu$ that are ``similar'' to $\tau$:
\begin{equation}
   \bleu{\Sim_\tau(\mu)}:=\bleu{\{ c\in \mu\ |\ t'(c,\mu)<\overline{t}<t''(c,\mu)) \}},\qquad {\rm where}\qquad \overline{t}=\frac{t^{-}_\tau+t^{+}_\tau}{2}.
\end{equation}
The cardinality of this set, denoted by \define{$\simd_\tau(\mu)$}, measures the level of \define{similarity} between $\mu$ and $\tau$. Expressed in words, the similarity of $\mu$ with $\tau$ counts the number of cells of $\mu$ having ``hook triangle slope vectors''  compatible with the (average) slope vector of $\tau$ (represented by $\overline{v}$ in \autoref{fig_hook_slopes}). The following is an example of a sequence of similar triangular partitions:
\begin{center}
\ylw
 \Yboxdim{.2cm}
\begin{align*}
&\yng(0),\yng(1), \yng(2),\yng(2,1), \yng(3,1),\yng(3,2),\yng(3,2,1), \yng(4,2,1),\yng(4,3,1),\yng(4,3,2),\yng(4,3,2,1),
\yng(5,3,2,1),\yng(5,4,2,1),\yng(5,4,3,1),\yng(5,4,3,2),\yng(5,4,3,2,1).
\end{align*}
\end{center}
They all share the common slope vector $\frac{1}{2}(1-\epsilon,1+\epsilon)$.

In particular $\mu$ is entirely similar to $\tau$ if and only if all its cells lie in $\simd_\tau(\mu)$. Then, both partitions are triangular, and they share a common slope vector (in fact many). It may be shown that there is exactly one such $\mu$ of size $k$, for all $0\leq k\leq |\tau|$. It follows that, for any triangular partition $\tau$ of size $N$, we have
\begin{align}
   \bleu{\mathcal{D}_{\tau}(q,t)}& = \bleu{(q^N+q^{N-1}t+\ldots + q t^{N-1}+t^N)+\ldots },\nonumber\\
       &=\bleu{s_N(q,t)+\ldots} \label{observation_Dn}
\end{align}
where the remaining terms are of degree (strictly) less than $n$. In the particular case $\delta_n=(n,n-1,\ldots,2,1,0)$, one may further see that
  \begin{equation}
   \bleu{\mathcal{D}_{\delta_n}(q,t)}=\bleu{\ldots +\Big(\sum_{k=\binom{n}{2}}^{\binom{n+1}{2}-2} s_{k,1}\Big) + s_{\binom{n+1}{2}}},
 \end{equation}
 where the missing terms are indexed by partitions having two parts, the second of which being larger or equal to $2$. It is also interesting to observe that, for $a+\ell=n-1$, we have
 	\begin{equation}
	   \bleu{\mathcal{D}_{\delta_n+1^{\ell}}(q,t;\a)}=\bleu{\frac{1}{1+\a} \nabla(\widehat{s}_{(a\,|\, \ell)})[1-\varepsilon \a])}.
	 \end{equation}

 \subsection{Examples of small values} Taking into account that $\mathcal{D}_{\tau}=\mathcal{D}_{\tau'}$, the values of $\mathcal{D}_{\tau}$ for (all) triangular partitions of size at most $8$ are given in \autoref{table_D_tau}.  \begin{table}[ht]
\begin{tabular}{lllllllllllllllllll}
$\left(0, \boldsymbol{1}\right)$ \\
$\left(1, s_{1}\right)$ \\
$\left(2, s_{2}\right)$ \\
$\left(21, s_{11} + s_{3}\right)$ & $\left(3, s_{3}\right)$ \\
$\left(31, s_{21} + s_{4}\right)$ & $\left(4, s_{4}\right)$ \\
$\left(32, s_{31} + s_{5}\right)$ & $\left(41, s_{31} + s_{5}\right)$ & $\left(5, s_{5}\right)$ \\
$\left(321, s_{31} + s_{41} + s_{6}\right)$ & $\left(42, s_{22} + s_{41} + s_{6}\right)$ & $\left(51, s_{41} + s_{6}\right)$ & $\left(6, s_{6}\right)$ \\
$\left(421, s_{32} + s_{41} + s_{51} + s_{7}\right)$ & $\left(52, s_{32} + s_{51} + s_{7}\right)$ & $\left(61, s_{51} + s_{7}\right)$ & $\left(7, s_{7}\right)$\\
$\left(431, s_{42} + s_{51} + s_{61} + s_{8}\right)$ & $\left(53, s_{42} + s_{61} + s_{8}\right)$ & $\left(62, s_{42} + s_{61} + s_{8}\right)$ & $\left(71, s_{61} + s_{8}\right)$ & $\left(8, s_{8}\right)$
\end{tabular}
\caption{Table of values $(\tau,\mathcal{D}_{\tau})$.}
\label{table_D_tau}
\end{table}

\subsection{The combinatorial side of the Delta Conjecture}\label{delta_comb}
We say that $1\leq i< l(\mu)$ is a \define{descent} of a partition $\mu$ if $\mu_i>\mu_{i+1}$, and we denote by $\des(\mu)$ the set of descent of $\mu$.  Let $\mu$ be contained in the triangular partition $\tau=(n,n-1,\ldots,1,0)$.
For a subset $J$ of $[n]:=\{1,\cdots,n\}$, we consider the number of $\tau$-area cells for $\mu$ which lie on rows lying in the set $J$, {\sl i.e.}:
    $$\bleu{\alpha_\tau(\mu, J)}:=\bleu{\sum_{j\in J} (\tau_j-\mu_j)}.$$
 With these notions at hand, we may define the polynomial
\begin{equation}\label{delta_conjecture}
  \bleu{\mathbb{D}_{\tau}(q,t;\a)}:=\bleu{\sum_{\mu\subseteq \tau} t^{\simd_{\tau}(\mu)} \sum_{\des(\mu)\subseteq J }q^{\alpha_\tau(\mu, J)} \a^{n-|J|}},
  \end{equation}
  which occurs on the combinatorial side of the ``Delta conjecture'' (see \cite{HaglundKnot} for more on this). The coefficient of $\a^0$ in $\mathbb{D}_{\tau}(q,t;\a)$ is clearly the polynomial $\mathcal{D}_{\tau}(q,t)$, since it corresponds to fixing $J=[n]$.   At the other extreme, $J=\varnothing$ forces $\des(\mu)=\varnothing$, hence $\mu=\varepsilon$. It follows that the coefficient of $\a^n$ is equal to $1$. One may show that the coefficient of $\a^{n-1}$ is the $(q,t)$-symmetric function $s_1+s_2+\ldots+s_n$. 
  To summarize,
  \begin{equation}
  \bleu{\mathbb{D}_{\tau}(q,t;\a)}:=\bleu{\mathcal{D}_{\tau}(q,t)\,\a^0 + \ldots +(s_1+s_2+\ldots+s_n)\,\a^{n-1}+\a^n}.
  \end{equation}

\section{Link to Link homology}\label{sec_homology}
In the context of link homology, the superpolynomial (Poincaré polynomial of the triply graded Khovanov-Rozansky homology) arises as (see \autoref{eval_hook_1a})
\begin{align}
  \bleu{ \mathcal{P}_{kn}(q,t;\a)} &= \bleu{\frac{e_{kn}[1-\varepsilon \a]}{1+\a}}\nonumber \\
                                                    &= \bleu{\sum_{i+j=n-1}\langle e_{kn}(\x),s_{(i\,|\,j)}\rangle\, \a^i}.
\end{align}
where $(k,n)=(ad,bd)$, with $(a,b)$ coprime. Recall that $e_{kn}(\x)=e_{kn}(q,t;\x):=\HallOper{e_d}{a}{b}(1)$.  For a torus knot, $d=1$. 
Although $e_{kn}(\x)\neq e_{nk}(\x)$ in general, both symmetric functions (appear to) agree on their hook-components. This is forced if the above is to hold, since the $(k,n)$ torus link coincides with the $(k,n)$ torus link. For the purpose of calculating $\mathcal{P}_{kn}$ via $e_{kn}(q,t;\x)$, we may choose $k\geq n$. Note that, for $k\geq n$, we have the following evaluation
  \begin{align}
  	\bleu{e_{kn}[q,0;1-u]} &= \bleu{q^{\delta_{kn}}\MacH_n(\x)[1-u]}\nonumber\\
	   &=\bleu{q^{\delta_{kn}}\prod_{i=0}^{n-1} (1-q^iu)},\label{eval_q0} 
  \end{align}
  where $\delta_{kn}:=\sum_{j}\big\lfloor k(n-j)/{n}\big\rfloor-j-1.$

The superpolynomial is symmetric in $q$ and $t$, and in fact it is Schur positive. This is to say that the coefficient of each $A^i$ is a linear combination of Schur functions in $q,t$, with its coefficients in $\mathbb{N}$. For example, for $k=2r+1$ and $n=2$, we have the general formulas:
\begin{align}
   \bleu{\mathcal{P}_{2r+1,2}(q,t;\a)} &=\bleu{ s_r+  s_{r-1} \,\a},\label{cas_deux}\\
   \bleu{\mathcal{P}_{3r+1,3}(q,t;\a)}  &= \bleu{\rho_{r}^{r}+(\rho_{r}^{r-1}+\rho_{r+1}^{r-1})\,\a+\rho_{r-1}^{r-1}\,\a^2}; \label{cas_trois}       
\end{align}
with $\rho_{r}^{k}$ standing for $\sum_{j=0}^k s_{r-2j,k+j}$. Other small values of $\mathcal{P}_{kn}=\mathcal{P}_{kn}(q,t;\a)$ are:
\begin{align*}
\mathcal{P}_{54} &=  (s_{31} + s_{41} + s_{6}) + (s_{11} + s_{21} + s_{31}  + s_{3} + s_{4} + s_{5})\,\a + (s_{1} + s_{2} + s_{3})\,\a^{2}+ \a^{3} ,\\
\mathcal{P}_{65} &= 
(s_{43} + s_{42} + s_{62}  + s_{61} + s_{71} + s_{81} + s_{(10)})\\
&\qquad+ (s_{33} + s_{32}+ s_{42} + s_{52}  + s_{31} +   2s_{41} + 2s_{51}  + 2s_{61} + s_{71}  + s_{6} + s_{7} + s_{8} + s_{9})\,\a\\
&\qquad+  ( s_{32} + s_{11} + s_{21} + 2s_{31} + s_{41} + s_{51} + s_{3}  + s_{4}  + 2s_{5} + s_{6} + s_{7})\,\a^{2}\\
&\qquad+ (s_{1} + s_{2} + s_{3} + s_{4})\,\a^{3}+ \a^{4}.
\end{align*}
Both of these are special cases of the more general identity (see \autoref{delta_comb}):
\begin{equation}
  \bleu{\mathcal{P}_{n+1,n}(q,t;\a)} = \bleu{\mathbb{D}_{\delta_{n}}(q,t;\a)},
\end{equation}
with $\delta_n=(n-1,\ldots,1,0)$.
Observe in the above examples that we have the Schur positivity:
\begin{equation}
   \bleu{(\mathcal{P}_{kn}\big|_{\a^i}) -e_i^\perp (\mathcal{P}_{kn}\big|_{\a^0})\ \in\ \N[s_\mu\, |\,\mu\ {\mathrm{partition}}]},
 \end{equation}
 for all $i$ (although this is trivial for $i>2$).
Here $(-)\big|_{\a^i}$ means that we take the coefficient of $\a^i$.

 We conjecture that, for all $(k,n)$, there exists\footnote{It is unique if we require that it be ``smallest'' possible. In some known instances $\A_{kn}$ arises as the multiplicity enumerator of alternating components for certain $\S_n$-modules (see \cite{BergeronGlkSnMult}).} a Schur positive polynomial $\A_{kn}$ such that
\begin{equation}\label{cond_skew}
  \bleu{\forall (i\geq 0)\quad (e_i^\perp \A_{kn})(q,t)} =\bleu{\mathcal{P}_{kn}\big|_{\a^i}} = \bleu{\langle e_{kn}(\x),s_{(i\,|\,j)}\rangle}.
 \end{equation}
For the case $i=0$ to hold, we clearly need $\A_{kn}$ to coincide with $\mathcal{P}_{kn}$ on terms involving Schur function indexed by partitions of length at most two. In general, the $\A_{kn}$ will also involve some extra Schur functions, indexed by partitions having more than two parts.  

As first explicit examples, we may directly check that setting $\A_{2r+1,2}=s_r$ and $\A_{3r+1,3}=\rho_r^r$ do satisfy the required conditions for all $r$, with the values given in \autoref{cas_deux} and \autoref{cas_trois}. 
Further examples of explicit values are:
\begin{align*}
  &\A_{54} = s_{111} + s_{31} + s_{41} + s_{6},\\ 
  &\A_{65} = \underbrace{s_{1111} + s_{311} + s_{411} + s_{511}}_{\mathrm{extra\ terms}}  + s_{42} + s_{43} + s_{61} + s_{62} + s_{71} + s_{81} + s_{(10)}.
\end{align*}
In this last case, considering the coefficients of $\a^4$ and $\a^3$ in $\mathcal{P}_{65} $ given above, we see that the extra terms $s_{1111} + s_{311} + s_{411} + s_{511}$ are minimally required if we are to get
\begin{itemize}
\item $1$ by applying $e_4^\perp$ to $\A_{65}$, and
\item $s_1+s_2+s_3+s_4$ by applying $e_3^\perp$ to $\A_{65}$.
\end{itemize}
We may then check that condition \pref{cond_skew} is satisfied  for ``free'' at $i=1$. Observe that, using \autoref{hook_eval_1q} and  \autoref{eval_q0}, it follows from Conjecture \pref{cond_skew} that
\begin{equation}\label{formule_hook}
    \bleu{\frac{\A_{kn}[y(1-z)]}{y(1-z)}} = \bleu{y^{\delta_{kn}}(1-z)(y-z)\cdots (y^{n-2}-z)}
\end{equation}
This characterizes all terms indexed by hooks in $\A_{kn}$, since 
	$$\frac{s_{(a\,|\,\ell)}[y(1-z)]}{y(1-z)} = y^{a+\ell} (-z)^\ell,$$ 
and $s_\mu[y(1-z)]=0$ when $\mu$ is not a hook. We can read off the hook terms from the monomials that occur in the right-hand side of \autoref{formule_hook}, and these number $2^{n-2}$.

\null\vfill\break
\appendix

\section{Partition terminology}\label{sec_part}

\begin{wrapfigure}{r}{0.25\textwidth}
\vskip-10pt
\begin{tikzpicture}[scale=.3,>=stealth',auto,node distance=3cm, thick]
\coordinate (1) at (12,5.5);
\coordinate (2) at (8.5,4.2);
\coordinate (3) at (10,8.5);
\coordinate (4) at (6.2,6.5);
\coordinate (11) at (1.5,5.5);
\coordinate (12) at (3.5,4.2);
\coordinate (13) at (1.5,1.5);
\coordinate (14) at (4.8,1.5);
\ylw
\tyng(2cm,0cm,13,12,12,11,9,9,8,7,7,5,1);
\Yfillcolour{green!50}
\tyng(5cm,3cm,1);
\Yfillcolour{red!50}
\tyng(6cm,3cm,7);
\Yfillcolour{azure!80}
\tyng(5cm,4cm,1,1,1,1,1,1);
\draw [->,red] (1.west) to [out=180,in=60] (2.north);
\draw [->,azure] (3.west) to [out=180,in=0] (4.east);
\begin{scriptsize}
\node at ($(1)+(1.2,0)$) {$a(c)$};
\node at ($(3)+(1.2,0)$) {$\ell(c)$};
\node at (5.4,3.5) {$c$};
\end{scriptsize}
\end{tikzpicture}
\vskip-5pt
\wrapcaption{Arm and leg}
\vskip-12pt
\end{wrapfigure}
As usual \define{partitions} are decreasing sequences $\mu=\mu_1\mu_2\cdots \mu_\ell$ of integers $\mu_i\geq 0$. We denote by $0$, the unique \define{empty} partition. The $\mu_i$'s are the \define{parts} of $\mu$, and $|\mu|:=\mu_1+\mu_2+\ldots+\mu_\ell$ is its \define{size}. When $|\mu|=m$, $\mu$ is said to be a partition of $m$, and this is denoted by $\mu\vdash m$.  The \define{length} of $\mu$, denoted by $l(\mu)$, is the number of its non-zero parts.  One denotes by \define{$\eta(\mu)$} the integer $\sum_{j} (j-1)\mu_j$. The \define{(Ferrers) diagram}\footnote{Naturally drawn using Cartesian coordinates.}  of a partition $\mu$ is the set of \define{cells} $(i,j)$, for which $1\leq j\leq k$, and 
$1\leq i\leq \mu_j$. Most often, we denote the same way a partition and its diagram.  The cell $c=(i,j)$ is said to sit in \define{column}\index{partition!column} $i$ and \define{row}\index{partition!row} $j$ of (the diagram of) $\mu$.  Observe that cells have cartesian coordinates.

The \define{arm}  of a cell $c=(i,j)$ of $\mu$ is the number $a(c)=a_\mu(c):=\mu_j-i$ of cells of $\mu$ that sit to the right of $c$ on the same row. The \define{leg} of $c$ is the number $\ell(c)=\ell_\mu(c):=\mu'_i-j$ of cells that sit above $c$ in the same column. The \define{conjugate}\index{partition!conjugate} $\mu'$, of a partition $\mu$, is the partition whose diagram is $\mu'=\{(j,i)\,|\, (i,j)\in\mu\}$. 
Considering cells as ``vectors'' in $\N\times \N$, we have $(\eta(\mu),\eta(\mu'))=\sum_{(i,j)\in\mu} (i,j)$.
Partitions are also often specified via the \define{multiplicities} of their parts, then writing $\mu=1^{d_1}2^{d_i}\cdots n^{d_n}$ if $\mu$ contains $d_i$ parts of size $i$ (omitting $0$ multiplicities). 

The \define{hook length}\index{cell!hook length},  of a cell $c$ in $\mu$, is \define{$\hook(c)=\hook_\mu(c):= 1+a_\mu(c)+\ell_\mu(c)$}. 
In \define{Frobenius' notation for hook shape partitions}, we write $(a\, |\, \ell)$ for the partition with one part of size $a+1$, and $\ell$  parts of size $1$. 
We also denote by $\overline{\mu}$ the partition obtained from $\mu=\mu_1\mu_2\cdots \mu_k$ by removing its first part, {\sl i.e.}  $\overline{\mu}=\mu_2\cdots \mu_k$. Finally, the \define{sum} of two partitions is ${\mu+\lambda}:={\rho_1\rho_2\cdots \rho_n}$, with ${\rho_i:=\mu_i+\lambda_i}$,
 for $n=\max(l(\mu),l(\lambda))$, and partitions being padded with zeros for this to make sense.
 
Finally, recall that ${\lambda \prec \mu}$ stands for 
$\lambda$ being (strictly) smaller than $\mu$ in the \define{dominance order}, which is defined by 
          $$\bleu{\lambda\preceq \mu}\qquad  \hbox{ iff \qquad}\qquad \forall i\qquad \bleu{\sum_{j=1}^{{i}}  \lambda_j \leq \sum_{j=1}^{{i}}  \mu_j },$$
adding $0$ parts if necessary to make sense of the above. This is a partial order on partitions of $n$. For example, with $n=6$, we have 
\begin{figure}[ht]
\begin{tikzpicture}[scale=.4]
\ylw
 \Yboxdim{.2cm}
\coordinate (P6) at (-1,0);
\coordinate (P51) at (1,0);
\coordinate (P42) at (5,0);
\coordinate (P411) at (9,-2.2);
\coordinate (P33) at (9,2.2);
\coordinate (P321) at (13.2,0);
\coordinate (P3111) at (17,-2);
\coordinate (P222) at (17,2);
\coordinate (P2^211) at (22,0);
\coordinate (P21111) at (26.5,0);
\coordinate (P111111) at (32,0);
\draw[left]  (P6) node {\hbox{\yng(1,1,1,1,1,1)}};
\draw[right]  (P51) node {\hbox{\yng(2,1,1,1,1)}};
\draw[right]  (P42) node {\hbox{\yng(2,2,1,1)}};
\draw[below right,anchor=west]  (P411) node {\hbox{\yng(3,1,1,1)}};
\draw[above right,anchor=west]  (P33) node {\hbox{\yng(2,2,2)}};
\draw[right,anchor=west]  (P321) node {\hbox{\yng(3,2,1)}};
\draw[below right,anchor=west]  (P3111) node {\hbox{\yng(4,1,1)}};
\draw[above right,anchor=west]  (P222) node {\hbox{\yng(3,3)}};
\draw[right]  (P2^211)node {\hbox{\yng(4,2)}};
\draw[right]  (P21111) node {\hbox{\yng(5,1)}} ;
\draw[right] (P111111) node {\hbox{\yng(6)}};
\draw[directed,color=black,thin] (P6) to (P51);
\draw[directed,color=black,thin] ($(P51)+(2.2,0)$) to (P42);
\draw[directed,color=black,thin] ($(P42)+(2,-1)$) to  (P411);
\draw[directed,color=black,thin] ($(P42)+(2,1)$) to (P33);
\draw[directed,color=black,thin] ($(P411)+(2.2,-0)$) to ($(P321)+(0,-1)$);
\draw[directed,color=black,thin] ($(P33)+(2.2,0)$) to ($(P321)+(0,1)$);
\draw[directed,color=black,thin] ($(P321)+(1.5,1)$) to  (P222);
\draw[directed,color=black,thin] ($(P321)+(1.5,-1)$) to (P3111);
\draw[directed,color=black,thin] ($(P3111)+(2.5,0)$) to ($(P2^211)-(0,.5)$);
\draw[directed,color=black,thin] ($(P222)+(2.6,0)$) to ($(P2^211)-(0,-.5)$);
\draw[directed,color=black,thin] ($(P2^211)+(2.7,0)$) to (P21111);
\draw[directed,color=black,thin] ($(P21111)+(3.5,0)$) to (P111111);
\end{tikzpicture}
\caption{The dominance order on partitions of $6$.}\label{FigDominanceOrder}
\end{figure}
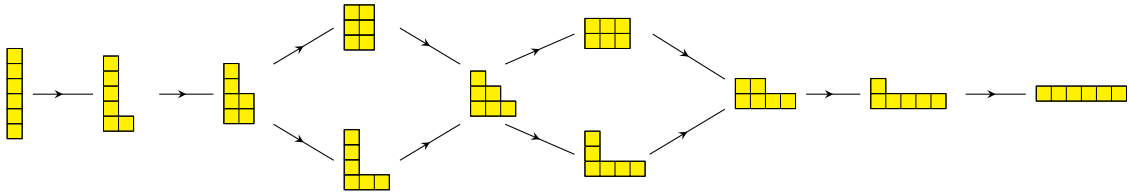

\section{Crash Course in Symmetric functions}\label{sec_sym}
To simplify all formulas, it is best to assume that  we work with a denumerable set of variables $\x=x_1,x_2,x_3,\ldots$. The ring $\Lambda_{\x}$ of symmetric polynomials\footnote{We say functions to underline that we are actually working with ``polynomials'' in infinitely many variables. See~\cite{macdonald} for a more formal definition.} in the variables $\x$ is graded by degree:
    \begin{equation}
        \bleu{\Lambda_{\x}=\bigoplus_d\, \Lambda_{\x}^{(d)}},
    \end{equation}   
with $\Lambda^{(d)}=\Lambda_{\x}^{(d)}$ denoting the degree $d$ homogeneous component. It is easy to see that $\Lambda^{(d)}$ affords as basis the partition indexed set of \define{monomial} symmetric polynomials 
\begin{equation}
   \bleu{m_\lambda(\x)}:=\bleu{\sum_{\boldsymbol{a} \in \N_{\infty}} \x^{\boldsymbol{a}}},
  \end{equation}
  in which $\N_{\infty}$ stands for finite support sequences $\boldsymbol{a}=(a_1,a_2,\ldots)$ of integers. In particular $|\boldsymbol{a} |:=\sum_i a_i$ is finite, since all but a finite number of the $a_i$'s are equal to $0$. We are here using the ``vector notation'' for monomials $ \bleu{\x^{\boldsymbol{a}}}:=\bleu{\prod_i x_i^{a_i}}$. 
The {degree} of the monomial $\x^{\boldsymbol{a}}$ is thus $|\boldsymbol{a} |$.
We mostly use the notation of~\cite{macdonald}, so that 
\begin{align*}
&\bleu{e_k(\x)}:=\bleu{m_{1^k}(\x)},&&\bleu{p_k(\x)}:=\bleu{m_k(\x)},&&\bleu{h_k(\x)}:=\bleu{\sum_{\lambda\vdash k} m_{\lambda}(\x)},
\end{align*}
 respectively  stand for the  \define{elementary}, \define{power sum}, and \define{complete homogeneous} symmetric polynomials.
When dealing with symmetric functions, it is often practical to drop the variables. Thus, for $\lambda=\lambda_1\cdots\lambda_k$ one sets
\begin{align*}
&\bleu{e_\lambda}:=\bleu{e_{\lambda_1}\cdots e_{\lambda_k}},&&\bleu{p_\lambda}:=\bleu{p_{\lambda_1}\cdots p_{\lambda_k}},&&\bleu{h_\lambda}:=\bleu{h_{\lambda_1}\cdots h_{\lambda_k}}.
\end{align*}
Each of the three sets $\{h_\lambda\}_{\lambda\vdash d}$, $\{e_\lambda\}_{\lambda\vdash d}$, and $\{p_\lambda\}_{\lambda\vdash d}$ constitute a linear basis of $\Lambda^{(d)}$. One of the most important basis for $\Lambda_{\x}^{(d)}$ is the set $\{s_\lambda(\x)\}_{\lambda\vdash d}$, of \define{Schur functions}.  These are linked to the $h_\lambda$ (and $e_\lambda$) via the \define{Jacobi-Trudi} formulas:
    \begin{align}\label{formule_jacobi}
       &\bleu{s_\mu}=\bleu{\det\left( h_{\mu_i+(j-i)}\right)_{1\leq i,j\leq l(\mu)}},\qquad {\rm and} 
      &&\bleu{s_{\mu'}}=\bleu{\det\left( e_{\mu_i+(j-i)}\right)_{1\leq i,j\leq l(\mu)}}.    
    \end{align}
We also consider, the linear and multiplicative \define{$\omega$ involution} such that $\omega(p_n)=(-1)^{(n-1)}p_n$ and $\omega(s_\mu)=s_{\mu'}$. In particular, this Hall scalar product preserving involution sends the $\{h_\lambda\}_{\lambda\vdash d}$ basis to the $\{e_\lambda\}_{\lambda\vdash d}$ basis. We finally recall the series identities
\begin{align}
     &\bleu{\HSeries(\x)}:=\bleu{\sum_{n\geq 0} h_n(\x)}= \bleu{\exp\big(\textstyle \sum_{k\geq 1} p_k(\x)/k\big)},\qquad {\rm and}\label{defn_Omega}\\
     &\bleu{\mathbb{E}(\x)}:=\bleu{\sum_{n\geq 0} e_n(\x)}= \bleu{\exp\big(\textstyle \sum_{k\geq 1} (-1)^{k-1} p_k(\x)/k\big)},
\end{align}  
In plethystic notation, the classical \define{Cauchy kernel identity} takes the form
     \begin{equation}\label{cauchy}
         \bleu{\HSeries[\x\y]= \prod_{i,j}\frac{1}{1-x_i\,y_j}=\sum_{\lambda} f_\lambda(\x)\, g_\lambda(\y)},
      \end{equation}
where $\{f_\lambda\}_\lambda$ and  $\{g_\lambda\}_\lambda$ are any \define{dual pair} of the bases for the \define{Hall scalar product}. 
Thus,
\begin{equation}\label{Classical_Cauchy}
     \bleu{h_n[\x\y]}=\bleu{\sum_{\lambda\vdash n}f_\lambda(\x)g_\lambda(\y)} \qquad {\rm iff}\qquad
 \bleu{\langle f_\lambda ,g_\mu\rangle} = \begin{cases}
     \bleu{1} & \text{if }\ \bleu{\lambda=\mu}, \\
     \bleu{0} & \text{otherwise}.
\end{cases}
      \end{equation}
Recall that the Hall-scalar product is characterized by the fact that the two bases $\{p_\lambda\}_\lambda$ and $\{{p_\lambda}/{z_\lambda}\}_\lambda$ constitute such a dual pair. Other dual pairs include:
the monomial basis $\{m_\lambda\}_\lambda$ together with the complete homogeneous basis $\{h_\lambda\}_\lambda$; and
the Schur functions $s_\lambda=s_\lambda(\x)$ is self-dual. Recall  that $s_\lambda[k]=s_\lambda(1,1,\ldots,1)$ may easily be calculated using the formula
\begin{equation}\label{eval_schur}
    \bleu{s_\lambda(\underbrace{1,\ldots,1}_{{k\ {\rm copies}}})=   \prod_{(i,j)\in\lambda}\frac{{k}-j+i}{\hook_\lambda{(i,j)}}},
\end{equation}
where $\hook_\lambda(c)$ stands for the hook length (see \autoref{sec_part}).
In particular,  $h_n[k]= \binom{k+n-1}{n}$, and $e_n[k]=\binom{k}{n}$.
We also have the following $q$-analog of the above identity
  \begin{equation}\label{qeval_schur}
    \bleu{s_\lambda(1,q,\ldots,q^{{k}-1})=  q^{\eta(\lambda)} \prod_{(i,j)\in\lambda}\frac{1-q^{{k}-j+i}}{1-q^{\hook_\lambda{(i,j)}}}},
\end{equation}
and in particular $h_n\big[\frac{1-q^k}{1-q}\big]=\qbinom{k+n-1}{n}_q$ and $e_n\big[\frac{1-q^k}{1-q}\big]=q^{\binom{n}{2}}\qbinom{k}{n}_q$.

\subsection*{Adjoints and multiplication} 
Relative to the Hall scalar product, we consider the \define{adjoint} $f^\perp$ of the \define{multiplication}\footnote{Borrowing notations from~\cite{AnyLine}.}  operator $f^\bullet$ by a given symmetric function $f$. In formula, $f^\bullet(g):=f\cdot g$,
and  
	\begin{equation} \bleu{\langle f^\bullet  g,h\rangle = \langle g,f^\perp h\rangle},
	\end{equation}
for all symmetric functions $h$ and $g$.
Evidently, both $f\mapsto f^\bullet$ and $f\mapsto f^\perp$ are ring homomorphism from $\Lambda_{\x}$ to $\End(\Lambda_{\x})$: 
\begin{align}
   &\bleu{1^\bullet} = \bleu{\Id}, &&\bleu{(f+g)^\bullet} = \bleu{f^\bullet+g^\bullet}, && \bleu{(f\cdot g)^\bullet} = \bleu{f^\bullet\circ g^\bullet},\\
   &\bleu{1^\perp} =\bleu{ \Id}, &&\bleu{(f+g)^\perp} = \bleu{f^\perp+g^\perp}, && \bleu{(f\cdot g)^\perp} =\bleu{ f^\perp\circ g^\perp}.
\end{align}
It is worth recalling the two classical \define{Pieri formulas}:
\begin{equation}
   \bleu{h_k^\bullet (s_\mu)(\x) }= \bleu{\sum_{\lambda/\mu} s_\lambda(\x)}\qquad {\rm and}\qquad
    \bleu{h_k^\perp (s_\lambda(\x))} =\bleu{ \sum_{\lambda/\mu} s_\mu(\x)},
\end{equation}
in which indices run over skew partitions that are horizontal bands having $k$ cells.	More generally, the above link between the operators  $s_\mu^\perp$ and  $s_\mu^\bullet$ makes it clear why that the same \define{Littlewood-Richardson} coefficients $c_{\mu\nu}^\lambda$ occur in the identities
\begin{align}
   \bleu{s_\mu^\bullet s_\nu(\x)}&= \bleu{s_\mu(\x) s_\nu(\x)} \nonumber\\
   &=\bleu{\sum_{\lambda} c_{\mu\nu}^\lambda \,s_\lambda(\x)},\\
      \bleu{s_\mu^\perp s_\lambda(\x)}&= \bleu{\sum_{\nu} c_{\mu\nu}^\lambda \,s_\nu(\x)}.
\end{align}
For sure, $s_\mu^\perp s_\lambda$ coincides with the usual \define{skew-Schur} function $s_{\lambda/\mu}$.

\subsection{Expansions in the $\pi_\mu$ basis} For calculations in the elliptic Hall algebra, we need to be able to expand symmetric functions in the $\pi_\mu$ basis. Hence we need formulas for the $c_\mu(q,t)$ such that
 \begin{equation}\label{f_coeff_pi_mu}
       \bleu{f(\x)}=\bleu{\sum_{\mu\vdash d} c_\mu(q,t) \, \pi_\mu(\x)}.
    \end{equation}
Recall that
    \begin{equation}
         \bleu{\pi_n(\x)}:=\bleu{\sum_{a+\ell=n-1} (-qt)^{-a} s_{(a\,|\,\ell)}(\x)},\qquad {\rm and}\qquad
         \bleu{\pi_\mu(\x)}:=\bleu{\pi_{\mu_1}(\x)\pi_{\mu_2}(\x)\cdots \pi_{\mu_k}(\x)}.
    \end{equation}
 Observe that at $t=1/q$, the symmetric function $\pi_n(\x)$ specializes to the power-sums $(-1)^{n-1} p_n(\x)$, hence the $\pi_\mu$'s are linearly independent.  The symmetric functions $\pi_n$ may be expressed as
\begin{equation}
        \bleu{\pi_{n}} =\bleu{\frac{h_n[(1-qt)\,\x]}{e_n[1-qt]}}
    \end{equation}
hence, for all partition $\mu$ of $n$, we get
     \begin{equation}
     \bleu{\pi_{\mu}(\x)}=\bleu{\frac{h_\mu[(1-qt)\,\x]}{e_\mu[1-qt]}}.
  \end{equation} 
Moreover, since
     \begin{align}
     \bleu{h_n[\x\y]}
       &=\bleu{\sum_{\mu\vdash n} m_\mu[{\y}/{(1-qt)}]\, h_\mu[\x(1-qt)]},
  \end{align} 
the dual basis of the $\pi_\mu$'s is:
     \begin{equation}
     \bleu{\rho_{\mu}(\x)}=\bleu{e_\mu[1-qt]\, m_\mu[{\x}/{(1-qt)}]}.
       \end{equation} 
Using the above identities, we deduce the following.
\begin{lemma}
For any $n$,
   \begin{align}
        \bleu{h_n(\x)} &= \bleu{\sum_{\mu\vdash n} e_\mu[1-qt]\, m_\mu[{1}/{(1-qt)}]\, \pi_{\mu}(\x)},\quad {\rm and}\label{formule_h_pi}\\
        \bleu{e_n(\x)} &= \bleu{\sum_{\mu\vdash n} e_\mu[1-qt]\, f_\mu[{1}/{(1-qt)}]\,\pi_{\mu}(\x)};\label{formule_e_pi}
   \end{align}
\end{lemma}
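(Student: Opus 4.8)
The plan is to read off both expansions from Cauchy-type kernel identities by a single plethystic specialization, rather than by computing scalar products. Both formulas amount to determining the (unique) coefficients $c_\mu(q,t)$ in \pref{f_coeff_pi_mu}; indeed, the displayed expansion just preceding the statement exhibits $h_n[\x\y]=\sum_\mu \rho_\mu(\y)\,\pi_\mu(\x)$, so $\{\pi_\mu\}$ and $\{\rho_\mu\}$ are dual for the Hall scalar product and the coefficients are uniquely determined. I would therefore manufacture, for each of $h_n$ and $e_n$, a two-alphabet identity whose $\pi_\mu(\x)$-coefficients are already the desired $c_\mu$, and then collapse the second alphabet to the single letter $1$.

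For \pref{formule_h_pi} I would start from the given expansion
$$h_n[\x\y] = \sum_{\mu\vdash n} m_\mu[\y/(1-qt)]\,h_\mu[\x(1-qt)],$$
rewrite $h_\mu[\x(1-qt)] = e_\mu[1-qt]\,\pi_\mu(\x)$ using the definition of $\pi_\mu$, and then set the alphabet $\y$ equal to the single letter $1$. Since $p_k[\x\cdot 1]=p_k[\x]$ forces $h_n[\x\cdot 1]=h_n(\x)$, while $m_\mu[\y/(1-qt)]$ becomes $m_\mu[1/(1-qt)]$, the identity collapses at once to \pref{formule_h_pi}.

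For \pref{formule_e_pi} I need the $e_n$-analogue of that kernel. I would first record the classical dual Cauchy expansion $e_n[\x\y]=\sum_{\mu\vdash n} h_\mu(\x)\,f_\mu(\y)$, where $f_\mu=\omega(m_\mu)$ is the forgotten symmetric function; this is the image of the $h$--$m$ Cauchy identity under $\omega$ applied in the $\y$ variables, equivalently the degree-$n$ part of $\prod_{i,j}(1+x_iy_j)$. As an identity in two independent alphabets it is stable under plethystic substitution, so I may take $\x\mapsto \x(1-qt)$ and $\y\mapsto \y/(1-qt)$ to obtain
$$e_n[\x\y]=\sum_{\mu\vdash n} h_\mu[\x(1-qt)]\,f_\mu[\y/(1-qt)]=\sum_{\mu\vdash n} e_\mu[1-qt]\,f_\mu[\y/(1-qt)]\,\pi_\mu(\x),$$
and then set $\y=1$ exactly as before, using $e_n[\x\cdot 1]=e_n(\x)$, to reach \pref{formule_e_pi}.

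The routine bookkeeping --- specialization of plethystic alphabets and the substitution $h_\mu[\x(1-qt)]=e_\mu[1-qt]\pi_\mu$ --- is mechanical. The one genuine point to nail down is the $e_n$-kernel with forgotten symmetric functions: I must justify $e_n[\x\y]=\sum_\mu h_\mu(\x)f_\mu(\y)$ and make sure $f_\mu$ is the correct partner of $h_\mu$ here (it is the $\omega$-image of $m_\mu$, not the Hall-dual of $h_\mu$), since replacing $f_\mu$ by $e_\mu$ or $m_\mu$ is the natural trap and would give a wrong coefficient. A quick degree-$2$ verification, comparing $e_2[\x\y]=\frac{1}{2}\big(p_1(\x)^2p_1(\y)^2-p_2(\x)p_2(\y)\big)$ against $h_2(\x)f_2(\y)+h_{11}(\x)f_{11}(\y)$ with $f_2=-p_2$ and $f_{11}=h_2$, pins down the correct identity before the rescaling step is applied.
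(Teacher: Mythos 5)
Your argument is correct and is essentially the deduction the paper intends: it reads the coefficients off from the very Cauchy-kernel identity $h_n[\x\y]=\sum_\mu m_\mu[\y/(1-qt)]\,h_\mu[\x(1-qt)]$ and the dual-basis identification stated just before the lemma, with the $e_n$ case obtained by the $\omega$-twisted (dual) kernel $e_n[\x\y]=\sum_\mu h_\mu(\x)f_\mu(\y)$ before specializing $\y=1$. Specializing $\y$ to the single letter $1$ is the same computation as pairing against $h_n$ (resp. $e_n$) in the $\y$ alphabet, so this matches the paper's route, and your degree-$2$ check of the forgotten-function kernel is a sensible safeguard.
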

where $f_\mu$ denotes the ``forgotten'' symmetric functions (dual to $e_\mu$).

 \subsection{Composition indexed Schur function} 
To simplify some of our upcoming expressions, it is useful to allow Schur functions to be indexed by composition, with the understanding that the resulting expression should be decoded using \autoref{formule_jacobi}. Thus for a composition $\alpha=\alpha_1\alpha_2\cdots \alpha_k$, one defines
    \begin{equation}\label{jacobi2}
       \bleu{s_\alpha:=\det\left( h_{\alpha_i+(j-i)}\right)_{1\leq i,j\leq k}}.
    \end{equation}
Exchanging rows $i$ and $i+1$, we see that
 	\begin{equation}
	    \bleu{s_\alpha=-s_\beta}, \qquad {\rm whenever}\qquad \bleu{\beta = \alpha_1\alpha_2\cdots (\alpha_{i+1}-1)(\alpha_i+1)\cdots \alpha_k}.
	\end{equation}
In particular, we get that $s_\alpha=0$, if $\alpha_{i+1}=\alpha_{i}+1$ for some $i$. With repeated application of the above rule, one may ``straightened'' any composition indexed Schur $s_\alpha$, to either get $0$ or a signed partition indexed Schur function $\pm s_\lambda$. For example, considering the compositions of $6$ that are not already partitions, we have
\begin{align*} 
    &s_{114} =s_{222}, && s_{132} =s_{213}=-s_{222} && s_{15}=-s_{42}\\
    &s_{24}=-s_{33}, && s_{141} = -s_{321},&& s_{1311}=-s_{2211},
\end{align*}
with $0$ as value for all other cases. 


   \section{Operators  \texorpdfstring{$\Dk{k}$}{DD}}
It is shown in \cite{nabla} that the operators\footnote{In which they are denoted $D_k$, and we consider $k\in \mathbb{Z}$.} $\Dk{k}:=\mathcal{X}^{(1,k)}$ may be directly
defined by the formal power series identity 
        \begin{equation}\label{defDn}
          \bleu{\mathcal{X}(z)}=   \bleu{\sum_{k=-\infty}^{\infty} \Dk{k}\,z^k:=  \HSeries[-z\x]^\bullet\, \HSeries\left[\x M/{z}\right] ^\perp}.
      \end{equation}   
 Equivalently, since $\HSeries[\y]^\perp f(\x)= f\left[\x+\y\right]$, we have
$\mathcal{X}(z)(f(\x))=  \HSeries[-z\x]^\bullet f\left[\x+M/{z}\right]$.
For our calculations in the elliptic Hall algebra, it is thus interesting to show the following.
\begin{lemma}\label{Dk_reducesto_D0}
For all $k$ and $j$ we have $\Dk{k+j}= [\Dk{k},p_j[\x/M]^\bullet]$, and 
$\Dk{k-j}= (-1)^j [p_j^\perp,\Dk{k}]$.
 \end{lemma}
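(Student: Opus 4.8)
The plan is to work entirely with the generating function $\mathcal{X}(z)=\sum_{k\in\Z}\Dk{k}\,z^k=\HSeries[-z\x]^\bullet\,\HSeries[\x M/z]^\perp$ from \autoref{defDn}, and to extract each of the two identities as the coefficient of $z^k$ in a single commutator relation. I would write $A(z):=\HSeries[-z\x]^\bullet$, which is a multiplication operator, and $B(z):=\HSeries[\x M/z]^\perp$, which is a skewing operator acting by $B(z)f=f[\x+M/z]$, so that $\mathcal{X}(z)=A(z)\,B(z)$. Two structural facts organize everything: multiplication operators commute among themselves and skewing operators commute among themselves, since $(fg)^\bullet=f^\bullet\circ g^\bullet$ and $(fg)^\perp=f^\perp\circ g^\perp$.

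For the first identity I would commute $\mathcal{X}(z)$ past $P:=p_j[\x/M]^\bullet$. One first notes that $p_j[\x/M]=p_j(\x)/p_j[M]=p_j(\x)/((1-q^j)(1-t^j))=\widehat{p_j}(\x)$, so $P$ is multiplication by $\widehat{p_j}$ and hence commutes with the multiplication operator $A(z)$. The key computation is the commutator $[B(z),P]$: applying $B(z)$ to $\widehat{p_j}\cdot f$ and using $p_j[M/z]=p_j[M]/z^j=(1-q^j)(1-t^j)\,z^{-j}$ gives
$$\widehat{p_j}[\x+M/z]=\frac{p_j(\x)+(1-q^j)(1-t^j)\,z^{-j}}{(1-q^j)(1-t^j)}=\widehat{p_j}(\x)+z^{-j},$$
so that $[B(z),P]=z^{-j}B(z)$. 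By the Leibniz rule $[\mathcal{X}(z),P]=A(z)[B(z),P]+[A(z),P]B(z)=z^{-j}\mathcal{X}(z)$, and reading off the coefficient of $z^k$ yields $[\Dk{k},p_j[\x/M]^\bullet]=\Dk{k+j}$.

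For the second identity I would instead commute the skewing operator $Q:=p_j^\perp$ with $\mathcal{X}(z)$. Since $Q$ commutes with $B(z)$, only $[Q,A(z)]$ survives. Here I would use that $p_j$ is primitive for the coproduct, so $p_j^\perp$ is a derivation and $[p_j^\perp,g^\bullet]=(p_j^\perp g)^\bullet$ for any $g$; combined with $p_j^\perp\,\HSeries[-z\x]=(-z)^j\,\HSeries[-z\x]$ (most transparently via $p_j^\perp=j\,\partial/\partial p_j$ applied to $\exp(\sum_k(-z)^k p_k/k)$), this gives $[Q,A(z)]=(-1)^j z^j A(z)$ and hence $[p_j^\perp,\mathcal{X}(z)]=(-1)^j z^j\,\mathcal{X}(z)$. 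Extracting the coefficient of $z^k$ gives $[p_j^\perp,\Dk{k}]=(-1)^j\Dk{k-j}$, which rearranges to the claimed $\Dk{k-j}=(-1)^j[p_j^\perp,\Dk{k}]$.

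The only genuinely delicate point I anticipate is the plethystic bookkeeping around $M$: one must treat $M=1-q-t+qt$ as a (virtual) alphabet rather than a scalar, so that $p_j[M/z]=(1-q^j)(1-t^j)\,z^{-j}$. This is precisely the factor that cancels the denominator of $\widehat{p_j}$ in the first identity and produces the clean shift by $z^{-j}$; getting this normalization right is what makes $p_j[\x/M]^\bullet$, rather than $p_j^\bullet$, the correct operator to bracket against. Everything else reduces to the commutator Leibniz rule, the commutativity of like-typed operators, and reading off coefficients of $z^k$.
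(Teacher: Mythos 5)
Your proposal is correct and follows essentially the same route as the paper: both arguments work with the generating series $\mathcal{X}(z)=\HSeries[-z\x]^\bullet\,\HSeries[\x M/z]^\perp$, derive $[\mathcal{X}(z),p_j[\x/M]^\bullet]=z^{-j}\mathcal{X}(z)$ from the shift $p_j[(\x+M/z)/M]=p_j[\x/M]+z^{-j}$, and derive $[p_j^\perp,\mathcal{X}(z)]=(-z)^j\mathcal{X}(z)$ from the derivation property of $p_j^\perp$ acting on $\HSeries[-z\x]$, before reading off coefficients of $z^k$. Your explicit factorization into $A(z)$ and $B(z)$ with the Leibniz rule is a slightly tidier packaging of the same computation, and your remark that $p_j[M/z]=(1-q^j)(1-t^j)z^{-j}$ is exactly the normalization the paper relies on.
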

 \begin{proof} 
The first identity is equivalent to 
 ${z^j [\mathcal{X}(z),p_j(\x/M)^\bullet]=  \mathcal{X}(z)}$ which we may check via the following calculation.
Observe that  $p_j[(\x+M/z)/M]^\bullet -p_j[\x/M]^\bullet  = z^{-j}$.
Hence:
\begin{eqnarray*}
        z^j\,  \Lie{\mathcal{X}(z)}{p_j[\x/M]^\bullet}\, f(\x) 
        	&=&z^j\sum_{k=-\infty}^{\infty} \Lie{\Dk{k}}{p_j[\x/M]^\bullet}(f(\x))\, z^{k}\\
        	&=&z^j\sum_{k=-\infty}^{\infty} \Dk{k} \left(p_j[\x/M] f(\x)\right)z^{k} -
	            z^j p_j[\x/M]\sum_{k=-\infty}^{\infty}  \Dk{k} f(\x) z^{k}\\
        	 &=&z^j \HSeries[-z\x]^\bullet p_j \!\left[\x +M/{z}\right]f \!\left[\x +M/{z}\right] \\
          &&\qquad\qquad\qquad -  z^j \HSeries[-z\x]^\bullet p_j(\x)  f \!\left[\x +M/{z}\right]  \\                   
         &=&z^j \HSeries[-z\x]^\bullet \Big(z^{-j} \,f \!\left[\x +M/{z}\right]   \Big)= \mathcal{X}(z) \, f(\x).                
  \end{eqnarray*}
  In particular, we get $ \Dk{k}= [\Dk{0},p_k[\x/M]^\bullet]$. 
  
The second identity (essentially following the calculation in the proof of~\cite[Lemma 3.3.3]{AnyLine}) is obtained similarly. Observe first that
  \begin{equation}
          \Lie{p_j^\perp}{\HSeries[-z\x]^\bullet \HSeries\left[\x M/{z}\right] ^\perp}
      	=\Lie{p_j^\perp}{\HSeries[-z\x]}\  \HSeries\left[\x M/{z}\right] ^\perp 
\end{equation}
since $f^\perp$ operators commute. Observe that $\HSeries[-z\x] = \exp(\sum_{j\geq 0} p_i (-z)^i/i)$, and that $p_j^\perp$ acts almost as a derivation on expression in $p_i$ \footnote{In other words, they generate a Heisenberg algebra with commutation relation $ [p_i,p_j^\perp]= i \delta_{i,j}$.}, with  
   $$p_j^\perp p_i/i =  \begin{cases}
      1 & \text{if}\ i=j, \\
      0 & \text{otherwise}.
\end{cases}$$
Thus we get that
   $$ p_j^\perp \HSeries[-z\x]^\bullet = (-z)^j\HSeries[-z\x]^\bullet+\HSeries[-z\x]^\bullet p_j^\perp. $$
\begin{eqnarray*}
         [p_j^\perp,\mathcal{X}(z)]
        	&=& \left[p_j^\perp\,,\, \HSeries[-z\x]^\bullet\, \HSeries\left[\x M/{z}\right] ^\perp\right]\\
      	&=& \Big[p_j^\perp\,,\, \HSeries[-z\x]^\bullet \Big]\ \HSeries\left[\x M/{z}\right] ^\perp \\
       	&=& (-z)^j\,\HSeries[-z\x]^\bullet\, \HSeries\left[\x M/{z}\right] ^\perp = (-z)^j\,\mathcal{X}(z).
  \end{eqnarray*}
In particular, $\Dk{-j}=(-1)^j [p_j^\perp,\Dk{0}]$.
  \end{proof}
 In simple words, all the operators $\Dk{k}$ may be obtained from $\Dk{0}$.

\renewcommand\refname{References and Further Readings}

\end{document}